\newcommand{\comment}[1]{}
\newtheorem{theorem}{Theorem}
\newtheorem{rem}{Remark}
\newtheorem{prop}{Proposition}
\newtheorem{definition}{Definition}
\newtheorem{corollary}{Corollary}
\begin{document}

\title{Developments in  perfect simulation of Gibbs measures through a
new result for the extinction of Galton-Watson-like processes  }
\author{ Emilio De Santis \\
%EndAName
{\small Dipartimento di Matematica }\\
{\small Universit\`a "La Sapienza",  Roma, Italia}\\
{\small \texttt{desantis@mat.uniroma1.it}} \and Andrea Lissandrelli \\
%EndAName
{\small Dipartimento di Matematica }\\
{\small Universit\`a "La Sapienza",  Roma, Italia}\\
{\small \texttt{andrea.lissandrelli@gmail.com}}} \maketitle

\begin{abstract}
This paper deals with the problem of perfect sampling from a Gibbs
measure with infinite range interactions. We present some sufficient
conditions for the extinction of processes which are like
supermartingales when large values are taken. This result has deep
consequences on perfect simulation, showing that local modifications
on the interactions of a model do not affect simulability. We also
pose the question to optimize over a class of sequences of sets that
influence the sufficient condition for the perfect simulation of the
Gibbs measure. We completely solve this question both for the long
range Ising models and for the spin models with finite range
interactions.
\end{abstract}

\medskip

\medskip \noindent \textbf{Keywords:} Perfect Simulation, Stochastic Ordering, Gibbs
Measures, Ising Models, Galton-Watson Processes.

MSC 2000: 60K35, 82B20, 68U20, 60K35, 60J80.

\section{Introduction}\label{sec1}

In this paper we deal with the problem of perfect simulation of
Gibbs measures. The first algorithm of this kind was realized by
\cite{PW}. This paper opened a new field of research which is
evolving in different directions. In \cite{MG1998}, the authors
extended the results of \cite{PW} to  continuous state space. In
\cite{HS00}, the study of perfect sampling from a Gibbs measure
started and in \cite{DP} the authors showed the importance of
percolation in perfect simulation algorithms for Gibbs measures with
finite range interactions. In \cite{CFF}, the authors dealt with
long memory processes which means  that the state of the process at
a fixed time depends  on all its past history. In \cite{GLO}, the
authors considered the problem of perfect sampling from a Gibbs
measure with infinite range interactions.

We start from the  paper \cite{GLO} and we pose new questions. The
algorithm  described in \cite{GLO} is based on a probability
distribution that we improve. It, in our paper, depends on the
choice of a sequence of growing sets  having appropriate properties.
In Section \ref{sec3}, we pose the question to optimize over this
sequence. We completely solve the problem in the case of finite
range interactions and in the case of infinite range Ising models
(see Theorem \ref{ThIsing} and Remark \ref{esplicito}). In Theorem
\ref{minimo}, we show that there always exists an optimal choice
that in general one is not able to calculate. In Theorem
\ref{ThIsing}, specialized for the Ising model, we make explicit the
best sequence of these growing sets.

In Section \ref{sec4}, we present some sufficient conditions for the
extinction of a discrete process with values in $\mathbb{N}$.
Theorem~\ref{prop:1} presents this result and it has applications in
various areas. The assumptions of Theorem~\ref{prop:1} are weaker
than the ones for the extinction of Galton-Watson process  which is
solved as a particular case (see \cite{Wi} for Galton-Watson
process). This result has implications for the perfect simulation
algorithm, see Theorem \ref{affonda}, because it supplies a weaker
sufficient condition for the applicability of the algorithm, than
the condition given in \cite{GLO}. Finally, we establish an
equivalence relation among interactions in the sense that two
interactions are equivalent if they only differ on a finite region.
By Theorem \ref{pr3}, we prove that, given two equivalent
interactions, if one respects the sufficient condition for the
perfect sampling, then the other one satisfies it too.

In Appendix \ref{App}, we provide the pseudo code of the algorithm,
for the Ising model, which calculates the optimal sequence of
growing sets and, at the same time, builds a perfect sampling from
the Gibbs measure observed on  a finite window.

%%%%%%%%%%%%%%%%%%%% Nuovo
\section{Synopsis}\label{sec2}

%We present some results of \cite{GLO} with few changes of notation
%that will be useful in our paper.

Let $S=\{-1,1\}^{\mathbb{Z}^d}$ be the set of spin configurations.
We endow $S$ with $\mathcal{S}$, the $\sigma$-algebra generated by
cylinders. A point $v\in\mathbb{Z}^d$ is called \emph{vertex}. Let
$\sigma(v)\in\{-1,1\}$ be the value of the configuration $\sigma \in
S$ at vertex $v\in\mathbb{Z}^d$, and let $\sigma^v\in\{-1,1\}$ be
the value of the configuration modified in $v$, i.e.
$$
\sigma^v(u)=\sigma(u)\hbox{ for all } u\neq v,\ \sigma^v(v)=-\sigma(v).
$$
We write $ A \Subset \mathbb{Z}^d$ to denote that $A$ is a finite
subset of $ \mathbb{Z}^d$. The cardinality of a set $A$ is indicated
with $|A|$. An \emph {interaction} is a collection of real numbers
$\mathbf{J}=\{J_B\in\mathbb{R}: B {\Subset} \mathbb{Z}^d, \,
|B|\geq2\}$   such that
\begin{equation}\label{sommab}
\sup_{v\in\mathbb{Z}^d }\sum_{B:v \in B}|B|   |J_B|<\infty.
\end{equation}
We denote by $\mathcal{J}$ the collection of all the interactions.
Note that in literature more general definitions of interactions are
considered  but in  our paper we will  only use this more
restrictive definition, as  done also in \cite {GLO}.

For brevity of notation set $\chi_B(\sigma)=\prod_{v\in B}\sigma(v)$
for any $B\Subset\mathbb{Z}^d$ and $\sigma\in S$.
 A probability measure $\pi$ on $(S,\mathcal{S})$ is said to be a
Gibbs measure relative to the interaction $\mathbf{J}\in\mathcal{J}$
if for all $v\in\mathbb{Z}^d$ and for any $\zeta\in S$
\begin{equation}\label{spec}
    \pi(\sigma(v)=\zeta(v)|\sigma(u)=\zeta(u)\ \forall u\neq v ) =
\frac{1} {{1+\exp({-2\sum_{B:v\in B} {(J_B \chi_B(\sigma)} } ))}}
\,\,\, a.s.
\end{equation}
which are called local specifications.

 \comment{Usually the Gibbs measure is written with an
interaction depending on a positive parameter $\beta$, i.e. $
J^{\beta}_B = \beta J_B $ for $B\Subset\mathbb{Z}^d$. In the paper
we deal with Gibbs measures in regime of uniqueness and the Gibbs
measure associated to the interaction will be denoted omitting the
explicit dependence on $\beta$.}

 %This order will agree with the direction for the best choice which lower the sums in (\ref{2mhp}).

Let us define the set $ \mathcal{A}_{v} =\{ B \Subset \mathbb{Z}^d:
v \in B , J_B \neq 0  \}$, for $v \in \mathbb{Z}^d$; the set $
\mathcal{A}_{v}$ is finite or countable, therefore we can write
$\mathcal{A}_{v} = \{ A_{i, v}: i < N_v+1  \} $ where $ N_v = |
\mathcal{A}_{v}| $. We now introduce a sequence of sets with
appropriate properties that will replace the balls with distance
$L^1$ used in \cite{GLO}.

 Let $\mathbf{B}_v=(B_v(k)\Subset \mathbb{Z}^d :k\in\mathbb{N})$, for $v \in
\mathbb{Z}^d$, be a sequence of finite subsets in $\mathbb{Z}^d$
such that
\begin{itemize}
 \item[1)] $B_v(0)=\{v\}$;
 \item[2)] $B_v(k) \subset B_v(k+1)$ and $ B_v(k+1) \setminus  B_v(k) \neq \emptyset $, for $k \in  \mathbb{N}$;
 \item[3)] $\bigcup_{k\in \mathbb{N}}B_v(k)\supset \bigcup_{A \in \mathcal{A}_v}A= \bigcup_{i < N_v+1}A_{i,v}$.
\end{itemize}
We denote by $\mathcal{B}_v$ the space of the sequences verifying
1), 2) and 3).

\comment{ In this section we will use a fixed choice of
$\mathbf{B}_v \in \mathcal{B}_v$ that sometimes will not be
explicitly written.}

 In \cite{GLO}  a perfect simulation algorithm for a
Gibbs measure $\pi$ with long range interaction is presented. It can
be divided into two steps: \emph{the backward sketch procedure} and
\emph{the forward spin procedure}. For the applicability of the
algorithm they only have to assume  a condition on the first part,
i.e. on the backward sketch procedure. The algorithm is defined
through a Glauber dynamics having $\pi$ as reversible measure. A
process $(\sigma_t(v),v\in\mathbb{Z}^d,t\in\mathbb{R})$ taking
values in $S$ and having such dynamics, will be constructed. For any
$v\in\mathbb{Z}^d$, $\sigma\in S$ and $\mathbf{J} \in \mathcal{J}$
let $c_{v, \mathbf{J}}(\sigma)$ be the rate at which the spin in $v$
flips when the system is in the configuration $\sigma$,
$$
c_{v, \mathbf{J}}(\sigma)=\exp\bigg(-\sum_{B:v\in
B}J_B\chi_B(\sigma)\bigg) .
$$
The generator $G_{\mathbf{J}}$ of the process is defined on cylinder
functions $f:S\to\mathbb{R}$ as follows
$$
G_{\mathbf{J}}f(\sigma)=\sum_{v\in\mathbb{Z}^d} c_{v,
\mathbf{J}}(\sigma)[f(\sigma^v)-f(\sigma)].
$$
Assumption (\ref{sommab})
%$$
%\sup_{v\in\mathbb{Z}^d}\sum_{k=1}^\infty|B_v(k)|
%\bigg(\sum_{B:v\in B,B\subset B_v(k),B\not\subset B_v(k-1)}|J_B|\bigg)<\infty,
%\sup_{v\in\mathbb{Z}^d}\sum_{k=1}^\infty|B_v(k)|\lambda_v(k)<1.
%$$
implies the uniform boundedness of the rates $c_{v,
\mathbf{J}}(\sigma)$ with respect to $v$ and $\sigma$, and
$$
\sup_{v\in\mathbb{Z}^d}\sum_{u\in\mathbb{Z}^d}\sup_{\sigma\in
S}|c_{v, \mathbf{J}}(\sigma)-c_{v, \mathbf{J}}(\sigma^u)|<\infty.
$$
Hence, Theorem 3.9 of \cite{liggett} guarantees that
$G_{\mathbf{J}}$ is effectively the generator of a Markovian process
$(\sigma_t(v),v\in\mathbb{Z}^d,t\in\mathbb{R})$ having $\pi $ as
invariant measure.

The difficulty of dealing with a measure with long range interaction
is overcome through a decomposition of the rates $c_v(\sigma)$ as
a convex combination of local range rates.

%Now we write the basic notations.
\comment{ ==================================Let
$\{\Omega,\mathcal{F},P\}$ the probability space on which is defined
an independent family of uniform random variables
$\{U_n(v),n\in\mathbb{N},v\in\mathbb{Z}^d\}$. Processes that we
consider will be constructed as functions of this family. A perfect
sampling algorithm of the measure $\pi$ is a map
$$
F:[{0,1}]^{\mathbb{N}\times \mathbb{Z}^d}\to S, \hbox{ such that }
F(U_n(v),n\in\mathbb{N},v\in\mathbb{Z}^d) \hbox{ has distribution }
\pi.
$$
The perfect sampling algorithm $F$ of the measure $\pi$ is said to
stop almost surely after a finite number of steps if for every site
$v\in\mathbb{Z}^d$, there exists a  finite random subset
$F^v\subset\mathbb{Z}^d$ and a finite random variable
$N_{STOP}^v\geq1$ such that if
$$U'_n(w)=U_n(w)\quad\forall w \in F^v,\ \forall 1\leq n\leq N_{STOP}^v
$$
then
\begin{equation}
\label{fo}F(U_n(w),n\in\mathbb{N},w\in\mathbb{Z}^d)(v)=F(U'_n(w),n\in\mathbb{N},w\in\mathbb{Z}^d)(v).
\end{equation}
Observe that from (\ref{fo}) it follows that
$F(U_n(w),n\in\mathbb{N},w\in\mathbb{Z}^d)(v)$ depends only on
$U_n(w)$ for $n=1,...,N_{STOP}^v$.
=========================================}

To present the decomposition we define two probability
distributions. The first one selects a random region of dependence
and the second one updates the value of the spins. For
$v\in\mathbb{Z}^d$, $\mathbf{J} \in \mathcal{J}$, let
\begin{equation}\label{lambda}
    \lambda_{v,\mathbf{J}, \mathbf{B}_v}(k)=\left  \{
    \begin{array}{ll}
      \exp (-2\sum\nolimits_{B:v\in B}|J_B|) &{\hbox{ if }}k=0, \\
      \exp (-\sum\nolimits_{B:v\in B,B\not\subset B_v(1)}|J_B| )-
  \exp (-2\sum\nolimits_{B:v\in B}|J_B|) &{\hbox{ if }}k=1,  \\
  \exp(-\sum\nolimits_{B:v\in B,B\not\subset B_v(k)}|J_B|)-
  \exp (-\sum\nolimits_{B:v\in B,B\not\subset B_v(k-1)}|J_B|) & {\hbox{ if }}k\geq
  2.\\
    \end{array}
\right.
\end{equation}
 Note that, for  $v\in\mathbb{Z}^d$,
$(\lambda_{v,\mathbf{J}, \mathbf{B}_v}(k): k\in\mathbb{N})$ is a
probability distribution on $\mathbb{N}$ because of properties 1),
2) and 3) of $ \mathcal{B}_v$. \comment{For brevity of notation we
will omit the indices $\mathbf{J}$, $\mathbf{B}_v$, writing
$\lambda_{v} = \lambda_{v,\mathbf{J}, \mathbf{B}_v}$ when there is
no ambiguity.}

Moreover, for each $v\in\mathbb{Z}^d$, $\sigma\in S$ and $
\mathbf{J} \in \mathcal{J} $ let
$M_{v,\mathbf{J}}=2\exp(\sum_{B,v\in B}|{J_B}|)$,
\begin{equation}\label{probp0}
    p_{v,\mathbf{J}, \mathbf{B}_v}^{[0]}(1)=p_{v,\mathbf{J},
\mathbf{B}_v}^{[0]}(-1)=\frac{1}{2},
\end{equation}
\begin{equation}\label{probp1}
    p_{v,\mathbf{J},
\mathbf{B}_v}^{[1]}(-\sigma(v)|\sigma)=\frac{1}{M_{v,\mathbf{J}}}\frac{\exp(-\sum_{B:v\in
B,B\subset B_v(1)}J_B\chi_B(\sigma))-\exp(-\sum_{B:v\in B,B\subset
B_v(1)}|J_B|)} {1-\exp(-2\sum_{B:v\in B,B\subset
B_v(1)}|J_B|)\exp(-\sum_{B:v\in B,B\not\subset B_v(1)}|J_B|)},
\end{equation}
and for $k\geq2$
$$
p_{v,\mathbf{J}, \mathbf{B}_v}^{[k]}(-\sigma(v)|\sigma)=
\frac{\exp(-\sum_{B:v\in B,B\subset
B_v(k-1)}J_B\chi_B(\sigma))}{M_{v,\mathbf{J}}}\cdot$$
\begin{equation}\label{probp2}
    \cdot\frac{\exp(-\sum_{B:v\in B,B\subset B_v(k),B\not\subset
B_v(k-1)}J_B\chi_B(\sigma))- \exp(-\sum_{B:v\in B,B\subset
B_v(k),B\not\subset B_v(k-1)}|J_B|)} {1-\exp(-\sum_{B:v\in
B,B\subset B_v(k),B\not\subset B_v(k-1)}|J_B|)}.
\end{equation}
%and for $k\geq1$
%$$
%p_v^{[k]}(-\sigma(v)|\sigma)=%\frac{1}{2}\exp (-\sum\nolimits_{B,v\in
%B,B\subset B_v(k-1)}J_B\chi_B(\sigma))\cdot
%$$
%\begin{equation}\label{upd}
%\cdot\frac{\exp(-\sum_{B,v\in B,B\subset B_v(k),B\not\subset
%B_v(k-1)}J_B\prod_{w\in B}\sigma(w))-\exp(-\sum_{B,v\in B,B\subset
%B_v(k),B\not\subset B_v(k-1)}|J_B|)}{1-\exp(-\sum_{B,v\in B,B\subset
%B_v(k),B\not\subset B_v(k-1)}|J_B|)},
%\end{equation}
Finally set for any $k\geq1$
$$
p_{v,\mathbf{J},
\mathbf{B}_v}^{[k]}(\sigma(v)|\sigma)=1-p_{v,\mathbf{J},
\mathbf{B}_v}^{[k]}(-\sigma(v)|\sigma).
$$
It is possible with some calculations to prove that
$p_{v,\mathbf{J}, \mathbf{B}_v}^{[k]}\in [0,1] $, thus $
p_{v,\mathbf{J}, \mathbf{B}_v}^{[k]} (-\sigma(v)|\sigma)$ is a
probability distribution on $\{ -1,1\}$. The probabilities in
(\ref{probp0})-(\ref{probp2}) will be used in the forward spin
procedure.

 Notice that for each $a\in\{-1,1\}$, $p_{v,\mathbf{J}, \mathbf{B}_v}^{[0]}(a)$
does not depend on $v$ and that, by construction, for any $k\geq1$,
$p_{v,\mathbf{J}, \mathbf{B}_v}^{[k]}(-\sigma(v) |\sigma)$ depends
only on the restriction of the configuration $\sigma$ to the set
$B_v(k)$. This is an important property that links the backward
sketch procedure to the forward spin procedure.

The announced decomposition of the rates $c_{v, \mathbf{J}}(\sigma)
$ is stated in \cite{GLO} in the following proposition.

\begin{prop}\label{dech}
Under condition (\ref{sommab}),
the following decomposition holds for any $\sigma\in S$
\begin{equation}\label{21nov}
c_{v, \mathbf{J}}(\sigma)=M_{v,\mathbf{J}}\bigg[\frac{\lambda_{v,
\mathbf{J}, \mathbf{B}_v}(0)}
{2}+\sum_{k=1}^{\infty}\lambda_{v,\mathbf{J},
\mathbf{B}_v}(k)p_{v,\mathbf{J},
\mathbf{B}_v}^{[k]}(-\sigma(v)|\sigma)\bigg].
\end{equation}
\end{prop}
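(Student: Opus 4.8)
The plan is to verify the decomposition (\ref{21nov}) by direct algebraic manipulation, exploiting the telescoping structure built into the definition of $\lambda_{v,\mathbf{J},\mathbf{B}_v}$. The guiding idea is that the right-hand side is a convex combination (scaled by $M_{v,\mathbf{J}}$) of "local" contributions: the $k$-th term only sees the interactions $B$ with $B\subset B_v(k)$, and summing over $k$ should reassemble $\exp(-\sum_{B:v\in B}J_B\chi_B(\sigma))$ by cancellation. So I would substitute the explicit formulas for $\lambda_{v,\mathbf{J},\mathbf{B}_v}(k)$ from (\ref{lambda}) and for $p_{v,\mathbf{J},\mathbf{B}_v}^{[k]}(-\sigma(v)|\sigma)$ from (\ref{probp0})--(\ref{probp2}) into the bracket, and simplify.

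First I would isolate the $k=0$ term, which contributes $M_{v,\mathbf{J}}\lambda_{v,\mathbf{J},\mathbf{B}_v}(0)/2 = \exp(\sum_{B:v\in B}|J_B|)\exp(-2\sum_{B:v\in B}|J_B|) = \exp(-\sum_{B:v\in B}|J_B|)$, a clean lower bound for $c_{v,\mathbf{J}}(\sigma)$. Next, for $k\geq 1$, I would compute the product $M_{v,\mathbf{J}}\,\lambda_{v,\mathbf{J},\mathbf{B}_v}(k)\,p_{v,\mathbf{J},\mathbf{B}_v}^{[k]}(-\sigma(v)|\sigma)$. Here I would split according to the definition: for $k=1$, using (\ref{probp1}), the factor $\lambda_{v,\mathbf{J},\mathbf{B}_v}(1) = \exp(-\sum_{B:v\in B,B\not\subset B_v(1)}|J_B|) - \exp(-2\sum_{B:v\in B}|J_B|)$ is designed so that, when multiplied against the denominator appearing in $p^{[1]}$, the messy denominators cancel; a short computation should leave $\exp(-\sum_{B:v\in B,B\subset B_v(1)}J_B\chi_B(\sigma))\exp(-\sum_{B:v\in B,B\not\subset B_v(1)}|J_B|) - \exp(-\sum_{B:v\in B}|J_B|)$, where I have used $|B|\geq 2$ is irrelevant and that $\sum_{B:v\in B,B\subset B_v(1)}|J_B| + \sum_{B:v\in B,B\not\subset B_v(1)}|J_B| = \sum_{B:v\in B}|J_B|$. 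For $k\geq 2$, the analogous computation with (\ref{probp2}) gives
$$
\exp\Big(-\sum_{B:v\in B,B\subset B_v(k)}J_B\chi_B(\sigma)\Big)\exp\Big(-\sum_{B:v\in B,B\not\subset B_v(k)}|J_B|\Big) - \exp\Big(-\sum_{B:v\in B,B\subset B_v(k-1)}J_B\chi_B(\sigma)\Big)\exp\Big(-\sum_{B:v\in B,B\not\subset B_v(k-1)}|J_B|\Big),
$$
again after the denominators in $p^{[k]}$ cancel against $\lambda_{v,\mathbf{J},\mathbf{B}_v}(k)$ and one regroups the $|J_B|$ sums. Denoting by $a_k := \exp(-\sum_{B:v\in B,B\subset B_v(k)}J_B\chi_B(\sigma))\exp(-\sum_{B:v\in B,B\not\subset B_v(k)}|J_B|)$ for $k\geq 1$ and $a_0 := \exp(-\sum_{B:v\in B}|J_B|)$, the $k=1$ term is $a_1 - a_0$ and the $k\geq 2$ term is $a_k - a_{k-1}$, so the whole sum telescopes to $\lim_{k\to\infty} a_k$.

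The key step — and the one I expect to carry the analytic content rather than mere bookkeeping — is justifying that $\lim_{k\to\infty} a_k = c_{v,\mathbf{J}}(\sigma) = \exp(-\sum_{B:v\in B}J_B\chi_B(\sigma))$ and that the interchange of limit and summation (i.e.\ convergence of the series) is legitimate. This is exactly where property 3) of $\mathcal{B}_v$ enters: since $\bigcup_k B_v(k)\supset\bigcup_{A\in\mathcal{A}_v}A$, every $B$ with $v\in B$ and $J_B\neq 0$ eventually satisfies $B\subset B_v(k)$, so $\sum_{B:v\in B,B\not\subset B_v(k)}J_B\chi_B(\sigma)\to 0$ and $\sum_{B:v\in B,B\subset B_v(k)}J_B\chi_B(\sigma)\to\sum_{B:v\in B}J_B\chi_B(\sigma)$; the summability hypothesis (\ref{sommab}) gives the absolute convergence needed to pass to the limit inside the exponential and to control the tail $\sum_{B:v\in B,B\not\subset B_v(k)}|J_B|\to 0$. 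One must also check at the outset that all quantities are well-defined: that $\lambda_{v,\mathbf{J},\mathbf{B}_v}$ is a genuine probability distribution (nonnegativity of each term follows from monotonicity of the partial sums of $|J_B|$ over the increasing sets $B_v(k)$, and it sums to one by the same telescoping argument together with property 3)), and that the denominators in (\ref{probp1})--(\ref{probp2}) are strictly positive so the $p^{[k]}$ make sense — both are consequences of (\ref{sommab}) and properties 1)--3). Once these points are in place, collecting the $k=0$ term with the telescoped sum yields $M_{v,\mathbf{J}}[\lambda_{v,\mathbf{J},\mathbf{B}_v}(0)/2 + \sum_{k\geq1}\lambda_{v,\mathbf{J},\mathbf{B}_v}(k)p^{[k]}_{v,\mathbf{J},\mathbf{B}_v}(-\sigma(v)|\sigma)] = \exp(-\sum_{B:v\in B}J_B\chi_B(\sigma)) = c_{v,\mathbf{J}}(\sigma)$, which is the claim. (Since the statement is quoted from \cite{GLO}, I would also simply cite that reference for the identity and present the above as a verification sketch.)
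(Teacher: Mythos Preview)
Your proposal is correct and follows exactly the natural telescoping route: writing $M_{v,\mathbf{J}}\lambda_{v,\mathbf{J},\mathbf{B}_v}(k)p_{v,\mathbf{J},\mathbf{B}_v}^{[k]}(-\sigma(v)|\sigma)=a_k-a_{k-1}$ with $a_k=\exp(-\sum_{B:v\in B,B\subset B_v(k)}J_B\chi_B(\sigma))\exp(-\sum_{B:v\in B,B\not\subset B_v(k)}|J_B|)$, summing, and using property~3) of $\mathcal{B}_v$ together with (\ref{sommab}) to pass to the limit $a_k\to c_{v,\mathbf{J}}(\sigma)$. The paper itself does not give a proof of this proposition but simply attributes it to \cite{GLO}; your final parenthetical remark (cite \cite{GLO} and present the above as a verification) is therefore exactly in line with how the paper handles it.
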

% Note that  $M_{v,\mathbf{J}}$ could be thought as a parameter and
%its assigned value  makes (\ref{21nov}) true and $p_{v,\mathbf{J},
%\mathbf{B}_v}^{[k]}(-\sigma(v)|\sigma)\leq1$ for any $k\geq 1$.
\comment{==================================================
 Note that for each
$v\in\mathbb{Z}^d$, $k\geq1$ and $\sigma\in S$,
 the probability mass functions
$\lambda_v(\cdot)$ on $\mathbb{N}$ and $p_v^{[k]}(\cdot|\sigma)$ on
$\{-1,1\}$ have been defined differently than in \cite{GLO}. These
changes will be clarified in Remark \ref{oss0} and in Remark \ref{ossla}.

\begin{rem}\label{oss0}
According to the definition of $\lambda_v$, we have also set,
differently than in \cite{GLO}, $M_v=...$ instead of
$M_v=2\exp(\sum_{B:v\in B}|J_B|)$ and, for $k\in\mathbb{N}$,
$p_v^{[k]}$ as in (\ref{upd}). In Theorem 2 of \cite{GLO} the authors prove
the decomposition
$$
c_v(\sigma)=M_v\bigg[\frac{\lambda_v(0)}
{2}+\sum_{k=1}^{\infty}\lambda_v(k)p_v^{[k]}(-\sigma(v)|\sigma)\bigg].
$$
By the next proposition we show that our choices still satisfy the decomposition
and the proof is analogous to one of Theorem 2 of \cite{GLO}.
\end{rem}
==================================================}

\comment{============================================
\begin{proof} At first
let
$$
c_v^{[0]}(\sigma)=\frac{1}{2}M_v\lambda_v(0).
$$
Since for any $k\geq1$ and $\sigma\in S$,
\begin{equation}\label{decomp}
c_v^{[k]}(\sigma)-c_v^{[k-1]}(\sigma)=M_v\lambda_v(k)p_v^{[k]}(-\sigma(v)|\sigma),
\end{equation}
then for any $l\geq1$
\begin{equation}\label{dec1}
c_v^{[l]}(\sigma)=\sum_{k=1}^l\left(c_v^{[k]}(\sigma)-c_v^{[k-1]}(\sigma)\right)+c_v^{[0]}(\sigma)
=M_v\bigg[\frac{\lambda_v(0)}{2}+\sum_{k=1}^l\lambda_v(k)p_v^{[k]}(-\sigma(v)|\sigma)\bigg].
\end{equation}
Notice that, by condition (\ref{sommab}), we have
\begin{equation}\label{rate}
\lim_{l\to\infty}c_v^{[l] } (\sigma)=c_v(\sigma).
\end{equation}
Hence we obtain (\ref{dec2}) by combining (\ref{dec1}) with (\ref{rate}).
\end{proof}
==========================================================}

\comment{=================================================
 For
convenience the generator of the process
$(\sigma_t(v),v\in\mathbb{Z}^d,t\in\mathbb{R})$ given in
(\ref{gen1}) is rewritten as follows
$$
G(f(\sigma))=\sum_{v\in\mathbb{Z}^d}\sum_{a\in\{-1,1\}}c_v(a|\sigma)[f(\sigma^{v,a})-f(\sigma)],
$$
where $\sigma^{v,a}(u)=\sigma(u)$ for all $u\neq v$, $\sigma^{v,a}(v)=a$, and
$$
    c_v(a|\sigma)=\left\{
\begin{array}{lll}
  & c_v(\sigma) &{\hbox{ se }}a=-\sigma(v), \\
  & M_v-c_v(\sigma) &{\hbox{ se }}a=\sigma(v). \\
\end{array}
\right.
$$

With this representation
$$
G(f(\sigma))=\sum_{v\in\mathbb{Z}^d}\sum_{a\in\{-1,1\}}\sum_{k=0}^\infty M_v\lambda_v(k)p_v^{[k]}(a|\sigma)[f(\sigma^{v,a})-f(\sigma)].
$$
===================================================================
}

\comment{=======================================================
Here is the main result of \cite{GLO}. If
\begin{equation}\label{mainhp}
\sup_{v\in\mathbb{Z}^d}\sum_k|B_v(k)|\bigg(\sum_{B:v\in B,B\subset
B_v(k),B\not\subset B_v(k-1)}|J_B|\bigg)<\infty,
\end{equation}
then there exists $\beta_c>0$ such that for any $\beta<\beta_c$
there is a perfect sampling algorithm of $\pi$. This algorithm stops
after a finite numbers of steps almost surely and
\begin{equation}\label{main}
\sup_{v\in\mathbb{Z}^d}\mathbb{E}(N_{STOP}^v)\leq\frac{1}{1-\sup_{v\in\mathbb{Z}^d}
\sum_{k\geq1}|B_v(k)|\lambda_v(k)}.
\end{equation}
The perfect sampling algorithm of $\pi$ is implicitly defined by the
backward sketch procedure and the forward spin procedure.
==================================================================
}

Now in \cite{GLO} there is a construction of an auxiliary process
that links the Glauber dynamics with the perfect sampling algorithm
through  decomposition (\ref{21nov}).

Later on, for brevity of notation, we will omit the indices
$\mathbf{J}$, $\mathbf{B}_v$ when there is no ambiguity. The
backward sketch procedure constructs a process that we are going to
define. Let $M_v$ be  the mass associated to each vertex $v$. Let
$(C_n)_{n\in\mathbb{N}}$ be a process with homogeneous Markovian
dynamics and which takes values on
$\mathcal{C}=\{A\Subset\mathbb{Z}^d\}$. Let $C_0
\Subset\mathbb{Z}^d$ the set in which we want to observe the perfect
sampling from the Gibbs measure with infinite range interaction. If
$C_n = \emptyset $ then $C_{n+1 } = \emptyset $. If $C_n \neq
\emptyset$, then the set $C_{n+1 }$  is constructed as follows. A
random vertex $W_n$ is selected, proportionally to its mass, with
\begin{equation}\label{sortver}
\mathbb{P}(W_n =w|C_n)= \frac{M_w}{ \sum_{z \in C_n} M_z}, \,\,\,
\hbox{ for } w\in C_n.
\end{equation}
%We will choose all the $M_v$'s equal to a positive constant,
%therefore the probability in (\ref{sortver}) will be equal to
%$1/|C_n|$.
Formula (\ref{sortver}) will be used to define more general models
in Section~\ref{sec5}. Then a random value $K_{w,n}$ is drawn by
using the probability distribution $\lambda_w $, thus
$$
\mathbb{P}(K_{w,n} =k) =\lambda_w (k) ,  \hbox{ for } k \in
\mathbb{N}.
$$
 If
$ K_{w,n} =0 $ then $ C_{n+1 } = C_{n } \setminus \{ w\}$; if $
K_{w,n} =k $, for $ k\in \mathbb{N}_+$, then $ C_{n+1 } = C_{n }
\cup B_w(K_{w,n}) =C_{n } \cup B_w(k) $. The procedure ends at the
first time $m\in\mathbb{N}_+$ such that $C_m=\emptyset$. When this
happens, the forward spin procedure begins. Now the value of the
spin is assigned to all the vertices visited during the first stage,
starting at the last vertex with $k=0$. The assignment of spins is
done by using the update probabilities $p_v^{[k]}$, coming back up
to give the definitive value of the spin to the vertices belonging
to $C_0$.

\comment{ The following proposition is a summary of some results in
\cite{GLO} that we write in a more general form with respect to $
\mathbf{B}_v $. }

The following proposition characterizes the computability of the
algorithm and shows that there is an unique condition on the
backward sketch procedure and none on the second part of the
algorithm.
\begin{prop}\label{prop1}
 The perfect simulation algorithm in \cite{GLO} generates a random
field with distribution $\pi $ if and only if for any
$v\in\mathbb{Z}^d$
\begin{equation}\label{limsc}
    \limsup_{n \to \infty} C_n =\emptyset \,\,\,
    a.s.
\end{equation}
\end{prop}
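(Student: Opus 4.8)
The plan is to show the equivalence between the correctness of the algorithm and the extinction of the backward process $(C_n)_{n\in\mathbb{N}}$, using the decomposition in Proposition~\ref{dech} as the bridge. First I would observe that, by construction, the backward sketch procedure and the forward spin procedure together furnish a map whose output at a vertex $v\in C_0$ is measurable with respect to the finitely many random variables generated up to the extinction time $m$, \emph{provided} that $m<\infty$ almost surely; and by the Markovian dynamics of $(C_n)$, with the absorbing state $\emptyset$, the event $\{m<\infty\}$ coincides with the event $\{\limsup_n C_n=\emptyset\}$. So one direction reduces to checking that, on this event, the forward spin procedure indeed reconstructs a configuration whose law is $\pi$.

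For that direction I would argue as follows. The generator $G_{\mathbf{J}}$ with rates $c_{v,\mathbf{J}}(\sigma)$ admits, via \eqref{21nov}, the representation as a mixture: at each vertex $v$, with total rate $M_v$, one draws $k\sim\lambda_v$ and then flips to $-\sigma(v)$ with probability $p_v^{[k]}(-\sigma(v)|\sigma)$, which depends only on $\sigma$ restricted to $B_v(k)$. This is exactly the local structure exploited in the backward/forward construction of \cite{GLO}: running the Glauber dynamics from time $-\infty$, the value $\sigma_0(v)$ is determined by tracing back the finitely many clocks and dependency regions that influence $v$, which is precisely what $(C_n)$ records (started at $C_0$, selecting $W_n$ proportionally to mass as in \eqref{sortver}, and adjoining $B_{W_n}(K_{W_n,n})$). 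When $(C_n)$ reaches $\emptyset$ the recursion closes, and the forward pass assigns spins using the $p_v^{[k]}$ in the correct order; since $\pi$ is the (unique, by the uniqueness regime implicit in the setup) invariant measure of $G_{\mathbf{J}}$ and the stationary dynamics has been faithfully represented, the resulting configuration on $C_0$ has law $\pi$. This is essentially Theorem~2 and the surrounding construction of \cite{GLO}, restated for general $\mathbf{B}_v\in\mathcal{B}_v$ in place of $L^1$-balls; the verification that $p_v^{[k]}\in[0,1]$ and that the $\lambda_v$ sum to $1$ (already noted in the excerpt) is what makes the mixture legitimate.

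For the converse, I would show that if $\mathbb{P}(\limsup_n C_n\neq\emptyset)>0$ for some $v$ (taking $C_0=\{v\}$), then the algorithm fails to halt with positive probability, so it cannot return a sample: on $\{\limsup_n C_n\neq\emptyset\}$ the set $C_n$ never becomes empty (once empty it stays empty, by the dynamics), the backward procedure never terminates, and no finite prefix of the random input determines $\sigma_0(v)$, so no well-defined output is produced. Hence correctness forces \eqref{limsc}. The main obstacle, and the part requiring care rather than routine work, is the first direction: making precise that the backward exploration $(C_n)$ genuinely captures \emph{all} the dependencies of $\sigma_0(v)$ under the stationary Glauber dynamics — i.e. that no influence is lost by the mixture decomposition — and that the forward reassignment respects the conditional structure so that the output law is exactly $\pi$ and not merely some coupling of it. Since this is carried out in detail in \cite{GLO} and our $\mathbf{B}_v$ only generalizes the choice of nested regions without altering the algebra of \eqref{lambda}--\eqref{probp2} or Proposition~\ref{dech}, I would invoke that construction and check only that properties 1)--3) of $\mathcal{B}_v$ are the sole structural facts used.
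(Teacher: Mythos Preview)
Your proposal is correct and follows essentially the same approach as the paper: necessity is the trivial observation that an algorithm must halt, and sufficiency is that once the backward sketch procedure terminates, the forward spin procedure is well-defined (since $p_v^{[k]}\in[0,1]$ and \eqref{sommab} holds) and reproduces the stationary Glauber dynamics via the decomposition \eqref{21nov}, exactly as in \cite{GLO}. The paper's proof is a two-line pointer to these same facts; your write-up simply unpacks them in more detail, with the only quibble being your parenthetical on uniqueness, which is not assumed here but rather a consequence (cf.\ Remark~\ref{AAA}).
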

\begin{proof}\label{proof1}
Condition (\ref{limsc}) is surely necessary  by definition of
\emph{algorithm}. It is also sufficient because it means that the
backward sketch procedure stops in a finite number of steps (almost
surely), moreover conditions (\ref{sommab}) and $ p_v^{[k]}(\cdot
|\sigma) \in [0,1]$, which hold by hypothesis and by construction
respectively, are sufficient for the forward spin procedure.
\end{proof}
 A sufficient condition, given in \cite{GLO}, for
(\ref{limsc}) is
\begin{itemize}
    \item[(H1)]
    $\,$
    \vspace{-8mm}
    \begin{equation*}\label{2mhp}
\sup_{v\in\mathbb{Z}^d}\sum_{k=1}^\infty|B^*_v(k)|\lambda_v(k)<1,
\end{equation*}
where $B^*_v(k)$ is the ball, in norm $L^1$, centered in $v$ with
radius $k$.
\end{itemize}

We provide a weaker sufficient condition for (\ref{limsc}) than (H1)
that is presented in the following theorem.
\begin{theorem} \label{affonda}
For a given  $ \mathbf{J} \in \mathcal{J}$,
\begin{itemize}
    \item[\emph{(H2)}]  if  a collection
$\{\mathbf{B}_v\in\mathcal{B}_v: v \in \mathbb{Z}^d \}$ such that
\begin{equation*}
    \lim_{\Lambda\uparrow \mathbb{Z}^d} \sup_{v \notin
\Lambda}\sum_{k=1}^\infty|B_v(k)|\lambda_v(k)<1
\end{equation*}
\end{itemize}
can be constructed,
 then (\ref{limsc}) holds. Hence {(H2)} is a sufficient
 condition for the perfect sampling from the Gibbs measure related to
 $\mathbf{J}$ (see Proposition \ref{prop1}).
\end{theorem}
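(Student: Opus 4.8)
The plan is to reduce (H2) to a situation covered by Proposition \ref{prop1}, i.e. to show that the backward sketch process $(C_n)_{n\in\mathbb N}$ hits $\emptyset$ almost surely, by controlling the expected size of $C_n$ through a supermartingale-type argument once the process has left a large enough finite region. First I would fix a collection $\{\mathbf B_v\in\mathcal B_v:v\in\mathbb Z^d\}$ satisfying (H2), choose $\varepsilon>0$ and a finite $\Lambda\Subset\mathbb Z^d$ with
$$
\sup_{v\notin\Lambda}\sum_{k=1}^\infty|B_v(k)|\lambda_v(k)\le 1-\varepsilon,
$$
and then analyze one step of the dynamics \eqref{sortver}: conditionally on $C_n$ with $C_n\neq\emptyset$, a vertex $W_n=w$ is picked with probability $M_w/\sum_{z\in C_n}M_z$, and then $|C_{n+1}|-|C_n|$ equals $-1$ if $K_{w,n}=0$ and at most $|B_w(k)|-1$ if $K_{w,n}=k\ge 1$ (it is $\le$ because some vertices of $B_w(k)$ may already lie in $C_n$). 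Hence
$$
\mathbb E\big(|C_{n+1}|-|C_n|\,\big|\,C_n, W_n=w\big)\le -\lambda_w(0)+\sum_{k\ge1}\big(|B_w(k)|-1\big)\lambda_w(k)
=\sum_{k\ge1}|B_w(k)|\lambda_w(k)-1.
$$
For $w\notin\Lambda$ this is $\le-\varepsilon$, giving a genuine negative drift; for the finitely many $w\in\Lambda$ it is bounded above by some constant, so the drift is uniformly bounded.

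Next I would split $C_n$ into its part inside $\Lambda$ and its part outside. The vertices of $\Lambda$ contribute a bounded perturbation: since $|\Lambda|$ is finite and the masses $M_v$ are uniformly bounded (by \eqref{sommab}), there is a constant so that the total expected increment of $|C_n|$ is negative as long as $|C_n|$ is large relative to a threshold depending only on $\Lambda$ and $\varepsilon$. This is precisely the kind of "supermartingale when large values are taken'' hypothesis of Theorem \ref{prop:1}; the cleanest route is to invoke that theorem directly with $X_n=|C_n|$, after checking its hypotheses — a uniform bound on the positive part of the increments (which follows because a single step adds at most $\max_{w}|B_w(1)|\vee\dots$, but in fact one needs a bound on $\mathbb E[(|C_{n+1}|-|C_n|)^+\mid\mathcal F_n]$ or on a suitable moment, supplied by $\sum_k|B_v(k)|\lambda_v(k)<\infty$ uniformly) and the negative drift outside a finite set of states. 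Theorem \ref{prop:1} then yields $|C_n|=0$ eventually a.s., which is \eqref{limsc}, and Proposition \ref{prop1} finishes the argument.

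I expect the main obstacle to be the bookkeeping around the finite region $\Lambda$: the drift is only negative \emph{outside} $\Lambda$, whereas the walk $(C_n)$ keeps revisiting $\Lambda$, so one cannot literally treat $|C_n|$ as a supermartingale — one must argue that excursions of $|C_n|$ to large values are suppressed despite the bounded "bad'' contribution from $\Lambda$. The way I would handle this is either (i) to absorb $\Lambda$ into the "small values'' regime of Theorem \ref{prop:1} by noting that when $|C_n|$ is large, at most $|\Lambda|$ of its vertices are in $\Lambda$, so the average drift over a uniformly-chosen vertex is at most $-\varepsilon + (|\Lambda|/|C_n|)\cdot\mathrm{const}$, which is $\le -\varepsilon/2$ once $|C_n|\ge N_0:=2|\Lambda|\,\mathrm{const}/\varepsilon$; or (ii) to pass to the trace of the process on $\mathbb Z^d\setminus\Lambda$. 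Option (i) fits the statement of Theorem \ref{prop:1} verbatim (negative drift for all states with $X_n\ge N_0$, bounded-moment increments), so I would carry out (i). A secondary, purely technical point is verifying the integrability/moment condition required by Theorem \ref{prop:1} on $|C_{n+1}|-|C_n|$; this is immediate from $\sup_v\sum_k|B_v(k)|\lambda_v(k)<\infty$, which is itself implied by the finiteness in (H2) together with property 3) of $\mathcal B_v$ and condition \eqref{sommab}.
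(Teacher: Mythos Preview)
Your approach is exactly the paper's (carried out in Corollary~\ref{estinzione}): set $X_n=|C_n|$, compute the one–step drift, show it is nonpositive once $|C_n|$ exceeds a threshold determined by the finite ``bad'' set, and invoke Theorem~\ref{prop:1}. Your option~(i) is precisely the paper's argument, with $R_\delta=\{v:\eta_v>-\delta\}$ playing the role of your $\Lambda$. One small correction: the vertex is selected with the $M$-weights of \eqref{sortver}, not uniformly, so the conditional drift is $\sum_{w\in C_n}(M_w/\sum_{z\in C_n}M_z)\,\eta_w$; the paper uses $M_v\ge 1$ to conclude that this is $\le 0$ once $|C_n|\ge \lceil a|R_\delta|/\delta+|R_\delta|\rceil$.

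There is, however, a real gap in your checklist for Theorem~\ref{prop:1}. That theorem has \emph{no} moment or bounded–increment hypothesis; what it does require, besides the supermartingale condition~3) you verified, are conditions~2) and~4): a uniform positive probability $q_i>0$ of reaching $0$ from each small state $i\le N$ in some $n_i$ steps, and a uniform positive probability $p_i>0$ of leaving each large state $i>N$. You have not addressed either, and the ``integrability/moment'' discussion in your last paragraph is verifying a hypothesis that is not there. Both~2) and~4) follow from the single fact
\[
\xi:=\inf_{v\in\mathbb Z^d}\lambda_v(0)=\inf_{v}\exp\Big(-2\sum_{B\ni v}|J_B|\Big)>0,
\]
which is immediate from \eqref{sommab}; then $p_i\ge\xi$ for every $i$, and taking $n_i=N$ gives $q_i\ge\xi^N$. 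This is exactly the second half of the proof of Corollary~\ref{estinzione}, and without it the appeal to Theorem~\ref{prop:1} is incomplete.
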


In Section \ref{sec5} we will give the proof of this theorem.

\begin{rem}\label{AAA}
For a given $ \mathbf{J} \in \mathcal{J}$, if
$$
 \lim_{\Lambda\uparrow \mathbb{Z}^d} \sup_{v \notin
\Lambda}\min_{\mathbf{B}_v\in\mathcal{B}_v}
\sum_{k=1}^\infty|B_v(k)|\lambda_v(k)<1,
$$
then there exists a unique Gibbs measure verifying the local
specifications (see (\ref{spec})). Therefore {(H2)} can be seen also
as a sufficient condition for the uniqueness of the Gibbs measure.
In Theorem \ref{minimo}, we prove that the minimum in the previous
expression exists. Hence the  results on perfect simulation are
important also for the study of the transition phase, a classical
argument of the statistical mechanics.
 \end{rem}

%Further, in \cite{GLO} it is never used explicitly that the sets
%$B_v(k)$'s are balls, hence it is possible to follow their scheme
%just by replacing the balls with more general sets which we will
%introduce in Section 3.
%This change can make the value of the sums in the LHS of (\ref{2mhp}) as small as possible.

\comment{ We will not be involved in the second step of the
algorithm for lack of space, but for convenience of the reader, we
will give the complete algorithm at the end of Section \ref{sec4}. }

%%%%%%%%%%%%%%%%%%%%%% Fine

%%%%%%%%%%%%%%%%%%%% Fine Nuovo

\section{Stochastic ordering for $\lambda_{v, \mathbf{J}, \mathbf{B}_v}$ and an optimization problem for the perfect simulation }\label{sec3}

In this section we deal with the optimal choice of
$\mathbf{B}_v\in\mathcal{B}_v$, reaching concrete results. We start
with some definitions.

\begin{definition} \label{raffinata} For $v \in \mathbb{Z}^d$, the sequence $\mathbf{B}_v\in\mathcal{B}_v$ is
\emph{less refined}  than $\mathbf{B'}_v\in\mathcal{B}_v$, in
symbols $\mathbf{B}_v\preceq\mathbf{B'}_v$, if $\mathbf{B}_v$ is a
subsequence of $ \mathbf{B'}_v$.
\end{definition}

This relation between two sequences of  $\mathcal{B}_v$ is a partial
order. \comment{In fact the following properties are satisfied:
\\reflexivity: for each $\mathbf{B}\in\mathcal{B}$, $\mathbf{B}\preceq\mathbf{B}$;
\\antisymmetry: for each $\mathbf{B},\mathbf{C}\in\mathcal{B}$, if $\mathbf{B}\preceq\mathbf{C}$ and
$\mathbf{C}\preceq\mathbf{B}$, then $\mathbf{B}=\mathbf{C}$;
\\transitivity: for each $\mathbf{B_1},\mathbf{B_2},\mathbf{B_3}\in\mathcal{B}$,
if $\mathbf{B_1}\preceq\mathbf{B_2}$ and
$\mathbf{B_2}\preceq\mathbf{B_3}$, then
$\mathbf{B_1}\preceq\mathbf{B_3}$.

Hence the couple $(\mathcal{B},\preceq)$ is a partially ordered
set.} The set $\mathcal{B}_v$ has no minimum, nor maximum, nor even
minimal elements; nevertheless it has an uncountable infinite number
of maximal elements, corresponding to the sequences of sets which
increase by only one vertex at a time.

%Given an interaction $\mathbf{J}\in \mathcal{J}$, $v\in\mathbb{Z}^d$
%and a sequence $\mathbf{B}_v\in\mathcal{B}_v$ we set
%$\lambda_v=\lambda_{v,\mathbf{J},\mathbf{B}_v}$ as in (\ref{lambda})
%substituting the spheres with the sets in $\mathbf{B}_v$.

Let us define, for $v \in \mathbb{Z}^d$, a  probability distribution
obtained from $ \lambda_{v,\mathbf{J},\mathbf{B}_v}$ as follows
\begin{equation*}\label{dhat}
\begin{array}{lll}
&\hat{\lambda}_{v,\mathbf{J},\mathbf{B}_v}(|B_v(l)|-1)=\lambda_{v,\mathbf{J},\mathbf{B}_v}(l),
 &\hbox{for } l \in \mathbb{N},\\
&\hat{\lambda}_{v,\mathbf{J},\mathbf{B}_v}(i-1)=0, &\hbox{for }i\not\in\{|B_{v}(l)|,l\in\mathbb{N}\}.\\
\end{array}
\end{equation*}

\begin{theorem}\label{pr}
Let $v \in \mathbb{Z}^d$, $ \mathbf{J} \in \mathcal{J}$,  and
$\mathbf{B}_v$, $\mathbf{B'}_v\in\mathcal{B}_v$ such that
$\mathbf{B}_v\preceq\mathbf{B'}_v$.
 Then $\hat{\lambda}_{v,\mathbf{J},\mathbf{B'}_v}\preceq_{st}\hat{\lambda}_{v,\mathbf{J},\mathbf{B}_v}$.
\end{theorem}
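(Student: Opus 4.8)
The plan is to reduce the claim to the single-step case: if $\mathbf{B}_v \preceq \mathbf{B'}_v$ differ by exactly one index (i.e. $\mathbf{B'}_v$ is obtained from $\mathbf{B}_v$ by inserting one extra set between consecutive terms), then $\hat{\lambda}_{v,\mathbf{J},\mathbf{B'}_v}\preceq_{st}\hat{\lambda}_{v,\mathbf{J},\mathbf{B}_v}$; the general case follows since any subsequence relation is obtained (possibly as a limit) by iterating such single-step refinements, and $\preceq_{st}$ is preserved under the relevant limits. Recall that $X \preceq_{st} Y$ means $\mathbb{P}(X \geq t) \leq \mathbb{P}(Y \geq t)$ for all $t$, so I must show that the tail of $\hat{\lambda}_{v,\mathbf{J},\mathbf{B}_v}$ dominates that of $\hat{\lambda}_{v,\mathbf{J},\mathbf{B'}_v}$ pointwise.

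Next I would translate the tail probabilities into the explicit telescoping form coming from \eqref{lambda}. The key observation is that, for any $\mathbf{B}_v \in \mathcal{B}_v$ and any $m \geq 1$,
\[
\sum_{k \geq m} \lambda_{v,\mathbf{J},\mathbf{B}_v}(k) = \exp\!\Big(-\sum_{B : v \in B,\, B \not\subset B_v(m-1)} |J_B|\Big),
\]
because the sum telescopes (and the $k=0,1$ terms are designed precisely so this identity also holds at $m=1$, giving $1$). Hence for a threshold $i$, writing $m(i) = \min\{ l : |B_v(l)| - 1 \geq i\}$, we get
\[
\mathbb{P}\big(\hat{\lambda}_{v,\mathbf{J},\mathbf{B}_v} \geq i\big)
= \sum_{l : |B_v(l)|-1 \geq i} \lambda_{v,\mathbf{J},\mathbf{B}_v}(l)
= \exp\!\Big(-\!\!\sum_{\substack{B : v \in B \\ B \not\subset B_v(m(i)-1)}} \!\! |J_B|\Big).
\]
Now compare this with the same quantity for $\mathbf{B'}_v$. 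Since $\mathbf{B}_v$ is a subsequence of $\mathbf{B'}_v$, every set $B_v(m(i)-1)$ equals some $B'_v(l')$, and the set $B'_v(m'(i)-1)$ (the corresponding object for $\mathbf{B'}_v$) satisfies $B'_v(m'(i)-1) \subseteq B_v(m(i)-1)$: indeed $m'(i)-1$ is the last index of $\mathbf{B'}_v$ whose cardinality is $\leq i$, and because $\mathbf{B}_v$'s terms are among $\mathbf{B'}_v$'s, this index is at most the position of $B_v(m(i)-1)$ in the refined sequence. Consequently $\{B : v\in B,\, B \not\subset B'_v(m'(i)-1)\} \supseteq \{B : v\in B,\, B \not\subset B_v(m(i)-1)\}$, so the exponent for $\mathbf{B'}_v$ is more negative, giving
\[
\mathbb{P}\big(\hat{\lambda}_{v,\mathbf{J},\mathbf{B'}_v} \geq i\big) \leq \mathbb{P}\big(\hat{\lambda}_{v,\mathbf{J},\mathbf{B}_v} \geq i\big),
\]
which is exactly $\hat{\lambda}_{v,\mathbf{J},\mathbf{B'}_v}\preceq_{st}\hat{\lambda}_{v,\mathbf{J},\mathbf{B}_v}$.

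The main obstacle I anticipate is bookkeeping the index correspondence cleanly: one must be careful that the $\hat{\lambda}$'s are defined on the shifted scale $i \mapsto |B_v(l)|-1$, that the function $l \mapsto |B_v(l)|$ is strictly increasing (which follows from property 2 of $\mathcal{B}_v$, since each step adds at least one vertex) so that $m(i)$ is well defined, and that inserting an extra term in $\mathbf{B'}_v$ can only make the ``last index with cardinality $\leq i$'' refer to a \emph{smaller or equal} set than in $\mathbf{B}_v$. Once the identity for $\sum_{k\ge m}\lambda_v(k)$ is established — a short telescoping computation directly from \eqref{lambda} — the monotonicity is immediate from $|J_B| \geq 0$ and the inclusion of index sets; I would also remark that the $m=1$ boundary case must be checked separately because of the special form of $\lambda_v(0)$ and $\lambda_v(1)$.
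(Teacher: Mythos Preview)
Your approach---comparing the two distributions via the telescoping expression for partial sums of $\lambda_v$---is the same as the paper's. However, the execution contains two errors that happen to cancel, so the argument as written is not valid.

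First, the identity you state for the tail is wrong: for $m\ge1$ the telescoping sum gives the \emph{cumulative} distribution,
\[
\sum_{k=0}^{m}\lambda_v(k)=\exp\Big(-\sum_{B:v\in B,\,B\not\subset B_v(m)}|J_B|\Big)
\]
(see (\ref{telescopio})), so $\sum_{k\ge m}\lambda_v(k)=1-\exp(-\cdots)$, not $\exp(-\cdots)$. Your sanity check ``at $m=1$, giving $1$'' already fails: your formula evaluates to $\exp(-\sum_{B\ni v}|J_B|)\neq 1$.

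Second, the inclusion is reversed. Since $\mathbf{B}'_v$ refines $\mathbf{B}_v$, the largest set of cardinality $\le i$ available in $\mathbf{B}'_v$ \emph{contains} the corresponding set in $\mathbf{B}_v$; that is, $B'_v(m'(i)-1)\supseteq B_v(m(i)-1)$. Your justification (``$m'(i)-1$ is at most the position of $B_v(m(i)-1)$ in the refined sequence'') has the inequality backwards: the refined sequence has at least as many terms of cardinality $\le i$, so $m'(i)-1$ is \emph{at least} that position.

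With both signs corrected the argument works: $B'_v(m'(i)-1)\supseteq B_v(m(i)-1)$ makes the exponent less negative, hence $F'(i-1)\ge F(i-1)$ for the CDFs, which is the desired ordering. The paper packages this a bit more cleanly by observing that $F$ and $F'$ are in fact \emph{equal} at every jump point of the coarser $F$ (because there $B_v(j)=B'_v(j')$), and that between two such points $F$ is constant while $F'$ can only increase. Your opening reduction to single-step refinements plus a limit is unnecessary, and indeed you drop it in favor of the direct comparison.
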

\begin{proof}
For brevity of notation we write $ \hat{\lambda}_v=
\hat{\lambda}_{v,\mathbf{J},\mathbf{B}_v}$ and $ \hat{\lambda'}_v=
\hat{\lambda}_{v,\mathbf{J},\mathbf{B'}_v}$. To show the stochastic
ordering $\hat{\lambda'}_{v}\preceq_{st}\hat{\lambda}_{v}$ we
equivalently prove that for each  $n\in\mathbb{N}$,
\begin{equation}\label{l1}
F'(n)=\sum_{l=0}^n\hat{\lambda'}_v(l)\geq\sum_{l=0}^n\hat{\lambda}_v(l)
= F(n).
\end{equation}
The functions $F(n)$ and $F'(n)$ are the cumulative distribution
functions  relative to $\hat{\lambda}_v$  and $\hat{\lambda'}_v$
respectively. They are piecewise constant functions whose jumps
occur only in the points of the set $\{|B_v(l)|-1,l\in\mathbb{N}\}$
and $\{|B'_v(l)|-1,l\in\mathbb{N}\}$ respectively, i.e.
$$
F(n)=\sum_{l=0}^{n}\hat{\lambda}_v(l) =\sum_{l=0}^{j}{\lambda}_v(l)
, {\hbox{ where }}j=\max\{l\in\mathbb{N}:|B_v(l)|-1\leq n\},
$$
$$
F'(n)=\sum_{l=0}^{n}\hat{\lambda'}_v(l)
=\sum_{l=0}^{j'}{\lambda'}_v(l) , {\hbox{ where }}j'=\max\{l\in
\mathbb{N}:|B'_v(l)|-1\leq n\}.
$$
Now we show that for each $m\in\{|B_v(l)|-1,l\in\mathbb{N}\}$,
\begin{equation}\label{zanzara}
F(m)=F'(m).
\end{equation}
Let $m\in\{|B_v(l)|-1,l\in\mathbb{N}\}$, then
$$
F(m)=\sum_{l=0}^{j}{\lambda}_v(l), {\hbox{ where }}j{\hbox{ is the
unique index such that }}|B_v(j)|-1=m,
$$
$$
F'(m)=\sum_{l=0}^{j'}{\lambda'}_v(l), {\hbox{ where }}j'{\hbox{ is
the unique index such that }}|B'_v(j')|-1=m,
$$
from which, by the hypothesis of the theorem,
\begin{equation}\label{r!}
B_v(j)=B'_v(j').
\end{equation}

Note that the following sums are telescopic, hence
\begin{equation}\label{telescopio}
    \sum_{l=0}^n\lambda_v(l)=\exp\left(-\sum\nolimits_{B:v\in
B,B\not\subset B_v(n)}|J_B|\right) \hbox{ and }
\sum_{l=0}^n\lambda'_v(l)=\exp\left(-\sum\nolimits_{B:v\in
B,B\not\subset B'_v(n)}|J_B|\right),
\end{equation}
for $n\in\mathbb{N}_+$. Moreover $
F(0)=\hat{\lambda}_v(0)={\lambda}_v(0)={\lambda}'_v(0)=\hat{\lambda'}_v(0)=F'(0).
$

From (\ref{r!}) and (\ref{telescopio}),
$$
\sum_{l=0}^j\lambda_v(l)=\sum_{l=0}^{j'}\lambda'_v(l)
$$
immediately
follows and it implies (\ref{zanzara}). Since $F$ and $F'$ are
nondecreasing, from (\ref{zanzara}) and
$$
\{|B_v(l)|,l\in\mathbb{N}\}\subset\{|B'_v(l)|,l\in\mathbb{N}\}
$$
we obtain (\ref{l1}).
\end{proof}

 Analogously to \cite{GLO}, see
(H1), we introduce the following quantity that will be used later;
we call it \emph{birth-death expectation},
\begin{equation*}\label{media}
     \mu_{v,\mathbf{J}}(\mathbf{B}_v)=\sum_{l =1}^\infty
     |B_v(l)|\lambda_{v,\mathbf{J}, \mathbf{B}_v}(l)-1,
\end{equation*}
for $ \mathbf{J} \in \mathcal{J}$, $v \in \mathbb{Z}^d$,
$\mathbf{B}_v\in\mathcal{B}_v$.

We are now in the position to present our result concerning the
birth-death expectation, it will be involved in conditions (H1) and
(H2) for the perfect sampling.

\begin{corollary} \label{fonda}
Let  $ \mathbf{J} \in \mathcal{J}$, $v \in \mathbb{Z}^d$,
$\mathbf{B}_v$, $\mathbf{B'}_v\in\mathcal{B}_v$ such that
$\mathbf{B}_v\preceq\mathbf{B'}_v$. Then $\mu_{v,\mathbf{J}}(
\mathbf{B'}_v ) \leq \mu_{v,\mathbf{J}}( \mathbf{B}_v )$.
\end{corollary}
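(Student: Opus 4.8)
The plan is to recognise the birth-death expectation $\mu_{v,\mathbf{J}}(\mathbf{B}_v)$ as, up to an additive constant that does not depend on $\mathbf{B}_v$, the mean of the law $\hat{\lambda}_{v,\mathbf{J},\mathbf{B}_v}$ introduced just before Theorem \ref{pr}, and then to deduce the inequality from the stochastic ordering $\hat{\lambda}_{v,\mathbf{J},\mathbf{B'}_v}\preceq_{st}\hat{\lambda}_{v,\mathbf{J},\mathbf{B}_v}$ established there, using the elementary fact that stochastic domination of $\mathbb{N}$-valued random variables entails domination of their expectations.

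First I would rewrite $\mu_{v,\mathbf{J}}(\mathbf{B}_v)$ in terms of $\hat{\lambda}_{v,\mathbf{J},\mathbf{B}_v}$. Since $|B_v(0)|=1$ and $\sum_{l\ge 0}\lambda_{v,\mathbf{J},\mathbf{B}_v}(l)=1$, a one-line manipulation gives $\sum_{j\ge 0}j\,\hat{\lambda}_{v,\mathbf{J},\mathbf{B}_v}(j)=\sum_{l\ge 0}(|B_v(l)|-1)\lambda_{v,\mathbf{J},\mathbf{B}_v}(l)=\sum_{l\ge 1}|B_v(l)|\lambda_{v,\mathbf{J},\mathbf{B}_v}(l)+\lambda_{v,\mathbf{J},\mathbf{B}_v}(0)-1$, whence $\mu_{v,\mathbf{J}}(\mathbf{B}_v)=\sum_{j\ge 0}j\,\hat{\lambda}_{v,\mathbf{J},\mathbf{B}_v}(j)-\lambda_{v,\mathbf{J},\mathbf{B}_v}(0)$, and the same identity with primes for $\mathbf{B'}_v$.

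Next I would note, directly from (\ref{lambda}), that $\lambda_{v,\mathbf{J},\mathbf{B}_v}(0)=\exp(-2\sum_{B:v\in B}|J_B|)$ does not depend on the chosen sequence in $\mathcal{B}_v$ (this was already used inside the proof of Theorem \ref{pr}); hence this term cancels and $\mu_{v,\mathbf{J}}(\mathbf{B}_v)-\mu_{v,\mathbf{J}}(\mathbf{B'}_v)$ equals the difference of the means of $\hat{\lambda}_{v,\mathbf{J},\mathbf{B}_v}$ and $\hat{\lambda}_{v,\mathbf{J},\mathbf{B'}_v}$. Finally, combining Theorem \ref{pr} with the tail-sum representation $\mathbb{E}[X]=\sum_{n\ge 0}\mathbb{P}(X>n)$ — valid in $[0,\infty]$, so that no integrability is needed — the ordering $\hat{\lambda}_{v,\mathbf{J},\mathbf{B'}_v}\preceq_{st}\hat{\lambda}_{v,\mathbf{J},\mathbf{B}_v}$ yields $\sum_j j\,\hat{\lambda}_{v,\mathbf{J},\mathbf{B'}_v}(j)\le\sum_j j\,\hat{\lambda}_{v,\mathbf{J},\mathbf{B}_v}(j)$, which is exactly the assertion $\mu_{v,\mathbf{J}}(\mathbf{B'}_v)\le\mu_{v,\mathbf{J}}(\mathbf{B}_v)$. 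There is essentially no obstacle: the corollary is a formal consequence of Theorem \ref{pr}; the only subtlety worth a remark is that the expectations may a priori be infinite, which is precisely why the passage from stochastic order to ordered means should be phrased via the tail-sum formula rather than via linearity.
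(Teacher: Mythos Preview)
Your proof is correct and follows essentially the same route as the paper: both rewrite $\mu_{v,\mathbf{J}}(\mathbf{B}_v)$ in terms of $\hat{\lambda}_{v,\mathbf{J},\mathbf{B}_v}$ and then invoke the stochastic ordering of Theorem \ref{pr}. The only cosmetic difference is that the paper does not separate out $\lambda_v(0)$ as a constant but instead absorbs the $-\hat{\lambda}_v(0)$ term into a single nondecreasing function $g$ (with $g(0)=-1$, $g(i)=i$ for $i\ge 1$) and applies $\mathbb{E}[g(X'_v)]\le\mathbb{E}[g(X_v)]$ directly; your tail-sum remark about possibly infinite expectations is a nice extra care that the paper leaves implicit.
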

\begin{proof}
    Let  $ \mathbf{J}\in \mathcal{J}$, $v \in \mathbb{Z}^d$,
$\mathbf{B}_v$, $\mathbf{B'}_v\in\mathcal{B}_v$ such that
$\mathbf{B}_v\preceq\mathbf{B'}_v$ and let
$\hat{\lambda}_v=\hat{\lambda}_{v,\mathbf{J},\mathbf{B}_v}$,
$\hat{\lambda'}_v=\hat{\lambda}_{v,\mathbf{J},\mathbf{B'}_v}$ be the
corresponding measures. Consider two random variables $ X_v
\sim^\mathcal{L}\hat{\lambda}_v $ and $ X'_v
\sim^\mathcal{L}\hat{\lambda'}_v $. From Theorem \ref{pr}, it
follows that $ \mathbb{E}( f (X_v)) \geq \mathbb{E}( f (X'_v))$ for
each nondecreasing function $ f: \mathbb{N}\to \mathbb{R} $. Note
that
\begin{equation}\label{mediano}
     \mu_{v,\mathbf{J}}(\mathbf{B}_v)=\sum_{l =1}^\infty |B_v(l)|\lambda_v(l)-1
     = \sum_{l =1}^\infty (|B_v(l)|-1)\lambda_v(l)-\lambda_v(0)
\end{equation}
\begin{equation*}
 =
   \sum_{l =1}^\infty
   (|B_v(l)|-1)\hat{\lambda}_v(|B_v(l)|-1)-\hat{\lambda}_v(0) =
   \sum_{i =1}^\infty (i-1)\hat{\lambda}_v(i-1) -\hat{\lambda}_v(0)
   =\sum_{i =1}^\infty i\ \hat{\lambda}_v(i) -\hat{\lambda}_v(0),
\end{equation*}
therefore (\ref{mediano}) is the expected value of the random
variable $g( X_v)$ where,
\begin{equation}\label{funz}
g(i) =\left \{ \begin{array}{cc}
         -1 &\hbox{ if } i=0 ,\\
                  i &\hbox{ if } i\geq 1. \\
                \end{array} \right .
\end{equation}
The function in (\ref{funz}) is nondecreasing. Thus, by the
stochastic ordering, $\mu_{v,\mathbf{J}}( \mathbf{B'}_v
)=\mathbb{E}( g (X'_v)) \leq \mathbb{E}( g (X_v))=\mu_{v,\mathbf{J}}
( \mathbf{B}_v )$.
\end{proof}
\comment{ ==========================================
Finally we show
the usefulness of our choice of $\lambda_v$ which is simpler than
theirs.

\begin{rem}\label{ossla}
 For each $v\in\mathbb{Z}^d$ the sequence
$(\lambda_v(k))_{k\geq0}$, defined in (\ref{lambda}), is preferable
than the one given in \cite{GLO} which we indicate with
$(\lambda^*_v(k))_{k\geq0}$. Indeed, given an interaction
$\mathbf{J}= \{J_B\in\mathbb{R},B\Subset\mathbb{Z}^d\}$, since
$\lambda_v(0)>\lambda_v^*(0)$, $\lambda_v(1)<\lambda_v^*(1)$ and
$\lambda_v(k)=\lambda_v^*(k)$ for $k\geq2$, then the two measures
respect the stochastic ordering $\lambda_v\preceq_{st}\lambda_v^*$
for each $v\in\mathbb{Z}^d$. Since
$\hat{\lambda}_v(0)=\lambda_v(0)>\lambda^*_v(0)=\hat{\lambda}^*_v(0)$
and $\hat{\lambda}_v(0)+\hat{\lambda}_v(k)=
\hat{\lambda}^*_v(0)+\hat{\lambda}^*_v(k)$ for $k\geq|B_v(1)|-1$,
then $\hat{\lambda}_v\preceq_{st}\hat{\lambda}_v^*$. From the latter
stochastic ordering, the equalities in (\ref{mediano}), and the fact
that the function in (\ref{funz}) is nondecreasing, (\ref{media})
calculated by $\lambda_v$ is smaller than (\ref{media}) calculated
by $\lambda^*_v$. Hence our choice of $\lambda_v$ facilitates the
conditions (H1) and  (H2) for the applicability of the algorithm.
\end{rem}
===================================}

By the next two theorems, we will see that if an interaction
$\mathbf{J}$ verifies (H1), then all the interactions obtained from
$\mathbf{J}$ by changing them on a finite region and by lowing them
in absolute value elsewhere, still verify (H2).  By Theorem
\ref{affonda}, all the  Gibbs measures associated to these
interactions are perfectly simulable.

\begin{theorem}\label{pr2}
Let $v \in \mathbb{Z}^d$, $\mathbf{B}_v\in\mathcal{B}_v$,
$\mathbf{J}$, $\mathbf{\widetilde{J}} \in \mathcal{J}$ such that
$|\widetilde{J}_B|\leq|J_B|$ for each $B\Subset\mathbb{Z}^d$. Then
$\lambda_{v,\mathbf{\widetilde{J}},\mathbf{B}_v}
\preceq_{st}\lambda_{v,\mathbf{J},\mathbf{B}_v}$. Hence
$\mu_{v,\mathbf{\widetilde{J}}}(\mathbf{B}_v) \leq
\mu_{v,\mathbf{J}}(\mathbf{B}_v)$.
\end{theorem}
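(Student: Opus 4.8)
The plan is to reduce the stochastic-ordering claim to a pointwise inequality between cumulative distribution functions, using the telescoping identity (\ref{telescopio}). First I would record the CDF of $\lambda_{v,\mathbf{J},\mathbf{B}_v}$ explicitly: directly from the definition (\ref{lambda}) one has $\sum_{l=0}^{0}\lambda_{v,\mathbf{J},\mathbf{B}_v}(l)=\exp(-2\sum_{B:v\in B}|J_B|)$ and, for $n\ge 1$, $\sum_{l=0}^{n}\lambda_{v,\mathbf{J},\mathbf{B}_v}(l)=\exp(-\sum_{B:v\in B,\,B\not\subset B_v(n)}|J_B|)$, the intermediate terms cancelling telescopically (all the sums involved are finite by (\ref{sommab})). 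The key observation is that $\mathbf{B}_v$ enters only through the index families $\{B:v\in B,\,B\not\subset B_v(n)\}$, which do not depend on the interaction, while the interaction enters only through the sums $\sum|J_B|$ of absolute values over these families, with a negative coefficient.

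Next I would use the hypothesis $|\widetilde{J}_B|\le|J_B|$ for all $B$: each of these sums of absolute values can only decrease when $\mathbf{J}$ is replaced by $\widetilde{\mathbf{J}}$, so, since $x\mapsto\exp(-x)$ is decreasing, every partial sum $\sum_{l=0}^{n}\lambda_{v,\widetilde{\mathbf{J}},\mathbf{B}_v}(l)$ is at least the corresponding $\sum_{l=0}^{n}\lambda_{v,\mathbf{J},\mathbf{B}_v}(l)$, for every $n\in\mathbb{N}$. By the characterization of $\preceq_{st}$ through CDFs, this is precisely $\lambda_{v,\widetilde{\mathbf{J}},\mathbf{B}_v}\preceq_{st}\lambda_{v,\mathbf{J},\mathbf{B}_v}$. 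I do not expect any genuine obstacle here; it is the bookkeeping already carried out in (\ref{telescopio}) together with monotonicity of $\exp$ and of finite sums of nonnegative terms.

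For the consequence on the birth-death expectation I would argue exactly as in the proof of Corollary \ref{fonda}. Using the chain of equalities in (\ref{mediano}), $\mu_{v,\mathbf{J}}(\mathbf{B}_v)=\sum_{l\ge 1}(|B_v(l)|-1)\lambda_{v,\mathbf{J},\mathbf{B}_v}(l)-\lambda_{v,\mathbf{J},\mathbf{B}_v}(0)=\mathbb{E}(\psi(X_{\mathbf{J}}))$, where $X_{\mathbf{J}}\sim^\mathcal{L}\lambda_{v,\mathbf{J},\mathbf{B}_v}$ and $\psi(0)=-1$, $\psi(l)=|B_v(l)|-1$ for $l\ge1$; equivalently $\psi=g\circ(|B_v(\cdot)|-1)$ with $g$ as in (\ref{funz}). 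Property 2) of $\mathcal{B}_v$ makes $l\mapsto|B_v(l)|$ strictly increasing and forces $|B_v(1)|\ge 2$, so $\psi$ is nondecreasing; the only step I would check carefully is the passage from $l=0$ to $l=1$, namely $-1<|B_v(1)|-1$, which is immediate. Since the same sequence $\mathbf{B}_v$, hence the same function $\psi$, is used for both interactions, the stochastic ordering just established gives $\mathbb{E}(\psi(X_{\widetilde{\mathbf{J}}}))\le\mathbb{E}(\psi(X_{\mathbf{J}}))$, that is, $\mu_{v,\widetilde{\mathbf{J}}}(\mathbf{B}_v)\le\mu_{v,\mathbf{J}}(\mathbf{B}_v)$, which is the second assertion.
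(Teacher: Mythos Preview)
Your proof is correct and follows essentially the same approach as the paper: the stochastic ordering is obtained by comparing the cumulative sums via the telescopic identity (\ref{telescopio}) and the monotonicity of $x\mapsto e^{-x}$, exactly as the authors do. The paper's proof omits an explicit argument for the inequality on $\mu_{v,\mathbf{J}}(\mathbf{B}_v)$, treating it as immediate; your justification via the nondecreasing function $\psi(l)=|B_v(l)|-1$ (with $\psi(0)=-1$) is the natural way to spell it out and is in the spirit of Corollary~\ref{fonda}.
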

\begin{proof}
For brevity of notation we write $ \lambda_v=
\lambda_{v,\mathbf{J},\mathbf{B}_v}$ and $ \widetilde{\lambda}_v=
\lambda_{v,\mathbf{\widetilde{J}},\mathbf{B}_v}$. To show the
stochastic ordering, we equivalently prove that for each
$v\in\mathbb{Z}^d$, $n\in\mathbb{N}$
\begin{equation*}
\sum_{l=0}^n\widetilde{\lambda}_v(l)\geq\sum_{l=0}^n{\lambda}_v(l).
\end{equation*}
Since $|\widetilde{J}_B|\leq|J_B|$ for each $B\Subset\mathbb{Z}^d$,
then
$$
\widetilde{\lambda}_v(0)=\exp\left(-2\sum\nolimits_{B:v\in
B}|\widetilde{J}_B|\right) \geq\exp\left(-2\sum\nolimits_{B:v\in
B}|J_B|\right)=\lambda_v(0),
$$
and for $n\geq1$
$$
\sum_{l=0}^n\widetilde{\lambda}_v(l)=\exp\left(-\sum\nolimits_{B:v\in
B,B\not\subset B_v(n)}|\widetilde{J}_B|\right)\geq
\exp\left(-\sum\nolimits_{B:v\in B,B\not\subset B_v(n)}|J_B|\right)
=\sum_{l=0}^n\lambda_v(l).
$$
\end{proof}

The following result is directly related to our sufficient condition
(H2).
\begin{theorem}\label{pr3}
Given the interactions $\mathbf{J}$,
$\mathbf{\widetilde{J}}\in\mathcal{J}$, if the cardinality of
$\mathcal{C}=\{ B\Subset\mathbb{Z}^d: |J_B| \neq |\widetilde{J}_B|
\}$ is finite, then for $v\in\mathbb{Z}^d$ and
$\mathbf{B}_v\in\mathcal{B}_v$, \comment{the corresponding measures
$\lambda_v=\lambda_{v,\mathbf{J},\mathbf{B}_v}$,
$\widetilde{\lambda}_v=\lambda_{v,\mathbf{\widetilde{J}},\mathbf{B}_v}$
are such that}
\begin{equation}\label{emi}
\lim_{\Lambda\uparrow \mathbb{Z}^d} \sup_{v \notin
\Lambda}\mu_{v,\mathbf{J}}(\mathbf{B}_v)= \lim_{\Lambda\uparrow
\mathbb{Z}^d} \sup_{v \notin
\Lambda}\mu_{v,\mathbf{\widetilde{J}}}(\mathbf{B}_v).
\end{equation}
\end{theorem}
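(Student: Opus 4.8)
The plan is to exploit the fact that $\mu_{v,\mathbf{J}}(\mathbf{B}_v)$ is a ``local'' functional of the interaction. Inspecting formula (\ref{lambda}), every occurrence of $\mathbf{J}$ in $\lambda_{v,\mathbf{J},\mathbf{B}_v}(k)$ enters only through sums of the form $\sum_{B:v\in B}|J_B|$ and $\sum_{B:v\in B,\,B\not\subset B_v(k)}|J_B|$; consequently $\lambda_{v,\mathbf{J},\mathbf{B}_v}$, and therefore $\mu_{v,\mathbf{J}}(\mathbf{B}_v)=\sum_{l\geq1}|B_v(l)|\lambda_{v,\mathbf{J},\mathbf{B}_v}(l)-1$, depends on $\mathbf{J}$ only through the collection of numbers $\{|J_B|:B\ni v\}$.

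First I would set $D=\bigcup_{B\in\mathcal{C}}B$. Since $\mathcal{C}$ is finite and each $B\in\mathcal{C}$ satisfies $B\Subset\mathbb{Z}^d$, the set $D$ is finite. For any $v\notin D$ and any $B$ with $v\in B$ we have $B\notin\mathcal{C}$, i.e. $|J_B|=|\widetilde{J}_B|$; in particular $J_B=0\iff\widetilde{J}_B=0$, so $\mathcal{A}_v$ is the same set for $\mathbf{J}$ and for $\mathbf{\widetilde{J}}$, and hence the space $\mathcal{B}_v$ (which depends on the interaction only through $\mathcal{A}_v$ via property 3) is the same for both. Combining this with the locality observation above gives $\lambda_{v,\mathbf{J},\mathbf{B}_v}=\lambda_{v,\mathbf{\widetilde{J}},\mathbf{B}_v}$ and therefore
$$\mu_{v,\mathbf{J}}(\mathbf{B}_v)=\mu_{v,\mathbf{\widetilde{J}}}(\mathbf{B}_v)\qquad\hbox{for every }v\notin D.$$

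Then I would pass to the supremum. For every finite $\Lambda\supseteq D$ the set $\{v:v\notin\Lambda\}$ is disjoint from $D$, so $\sup_{v\notin\Lambda}\mu_{v,\mathbf{J}}(\mathbf{B}_v)=\sup_{v\notin\Lambda}\mu_{v,\mathbf{\widetilde{J}}}(\mathbf{B}_v)$. Since $\Lambda\mapsto\sup_{v\notin\Lambda}\mu_{v,\mathbf{J}}(\mathbf{B}_v)$ is nonincreasing in $\Lambda$, the limit $\Lambda\uparrow\mathbb{Z}^d$ exists (in $[-1,+\infty]$, as $\mu_{v,\mathbf{J}}(\mathbf{B}_v)\geq-\lambda_{v,\mathbf{J},\mathbf{B}_v}(0)\geq-1$) and may be computed along the cofinal subfamily $\{\Lambda:\Lambda\supseteq D\}$; along that subfamily the two sides agree, which yields (\ref{emi}).

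As for difficulty: there is essentially no obstacle here once one records that $\lambda_{v,\mathbf{J},\mathbf{B}_v}$ depends on $\mathbf{J}$ only through $(|J_B|)_{B\ni v}$. The only point that needs a little care is the bookkeeping ensuring that a fixed $\mathbf{B}_v$ lies in $\mathcal{B}_v$ simultaneously for $\mathbf{J}$ and $\mathbf{\widetilde{J}}$ when $v\notin D$, which is guaranteed by the equality of the zero--patterns of $\mathbf{J}$ and $\mathbf{\widetilde{J}}$ outside the finite set $D$.
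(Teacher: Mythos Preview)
Your proof is correct and follows essentially the same route as the paper: both arguments set $D=\bigcup_{B\in\mathcal{C}}B$, observe that $\lambda_{v,\mathbf{J},\mathbf{B}_v}=\lambda_{v,\mathbf{\widetilde{J}},\mathbf{B}_v}$ for every $v\notin D$ because $\lambda$ depends on the interaction only through $(|J_B|)_{B\ni v}$, and then pass to the limit over $\Lambda\supseteq D$. Your version is slightly more explicit in checking that $\mathcal{B}_v$ coincides for the two interactions when $v\notin D$ and in justifying the existence of the limit via monotonicity, but the underlying idea is identical.
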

\begin{proof}\label{prova3}
Note that the measures $\lambda_{v,\mathbf{J},\mathbf{B}_v}$,
$\lambda_{v,\mathbf{\widetilde{J}},\mathbf{B}_v}$ are equal for each
$v$ such that all the finite subsets $B$ containing $ v$ do not
belong to $\mathcal{C}$. In fact if $\{B\Subset\mathbb{Z}^d:v\in
B,B\in\mathcal{C}\}=\emptyset$, then for each $B$ including  $v$ we
have $|J_B|=|\widetilde{J}_B|$, hence
$\lambda_{v,\mathbf{J},\mathbf{B}_v}=
\lambda_{v,\mathbf{\widetilde{J}},\mathbf{B}_v}$ for each $k\geq0$.
Therefore for $ \Lambda \supset \bigcup_{B \in \mathcal{C}} B  $,
\begin{equation}\label{passando}
 \sup_{v \notin \Lambda}\mu_{v,\mathbf{J}}(\mathbf{B}_v)+1=
 \sup_{v \notin \Lambda}\sum_{k=1}^\infty|B_v(k)|\lambda_{v,\mathbf{J},\mathbf{B}_v}(k)=
 \sup_{v \notin
\Lambda}\sum_{k=1}^\infty|B_v(k)|\lambda_{v,\mathbf{\widetilde{J}},\mathbf{B}_v}(k)=
\sup_{v \notin
\Lambda}\mu_{v,\mathbf{\widetilde{J}}}(\mathbf{B}_v)+1.
\end{equation}
 Since the
cardinality of $\mathcal{C}$ is finite, then $\bigcup_{B \in
\mathcal{C}}B$ is finite. Therefore, passing to the limit in
(\ref{passando}) for $\Lambda\uparrow\mathbb{Z}^d$, we obtain
(\ref{emi}).
\end{proof}

 %The birth-death expectation, defined in
%(\ref{media}), is a function
%\begin{equation}\label{muf}
 %   \mu_v = \mu_{v, \mathbf{J}} (\mathbf{x}) , \,\,\, \mathbf{x} \in
  %  \mathcal{B}_v.
%\end{equation}
Condition (H2) says that $  \lim_{\Lambda\uparrow \mathbb{Z}^d}
\sup_{v \notin \Lambda} \mu_{v,\mathbf{J}}(\mathbf{B}_v) <0 $,
therefore we are interested in finding the infimum value $   \inf_{
\mathbf{x} \in \mathcal{B}_v } \mu_{v,\mathbf{J}}( \mathbf{x})  $.

 We define $ \mathcal{E}_v $ by distinguishing two cases  $ N_v =
 \infty$,  $ N_v <\infty$. In the first case  let $ \mathcal{E}_v $ be a
  subset of $
\mathcal{B}_v$ such that each  element $ (B_v (l))_{l \in
\mathbb{N}} \in \mathcal{E}_v $ has the property that there exists a
sequence $(i_k)_{k\in\mathbb{N}}$ where $ B_v (l)= \bigcup_{k=1}^l
A_{i_k, v} $ for $ l \in \mathbb{N}_+ $. When $ N_v <\infty$, let $
\mathcal{E}_v $ be a
  subset of $
\mathcal{B}_v$ such that each element $ (B_v (l))_{l \in \mathbb{N}}
\in \mathcal{E}_v $ has the property that
\begin{equation}\label{cena}
\exists\bar{l}: B_v(\bar{l})=\bigcup_{k=1}^{N_v}A_{k,v},\,\, \exists
(i_1 , \ldots , i_{\bar{l}}) : B_v(l)=\bigcup_{k=1}^{l}A_{i_k,v}\
\forall l\leq\bar{l}.
\end{equation}
We notice that, for each $l>\bar{l}$, $\lambda_v(l)=0$ for any
choice of $B_v(l)$ verifying 2).

In the next theorem we restrict the research of the infimum from
$\mathcal{B}_v $ to $ \mathcal{E}_v$. This produces a sensitive
improvement when $N_{v}$ is finite for each vertex $ v \in
\mathbb{Z}^d$ , in this case the infimum is a minimum because there
is a finite number of choices in (\ref{cena}), and this fact allows us to calculate it.
In any case, in Theorem \ref{minimo} we will prove that  the minimum of $
\mu_{v,\mathbf{J}}( \mathbf{x})$ always exists.

We endow $ \mathcal{B}_v$ with the discrete topology to consider the
limit of a sequence in $\mathcal{B}_v $ in the next two theorems.

\begin{theorem}\label{Thscelta}
Let $\mathbf{J}\in\mathcal{J}$, $v \in \mathbb{Z}^d$, then
  $$
  \inf_{ \mathbf{x} \in \mathcal{B}_v } \mu_{v,\mathbf{J}}( \mathbf{x}) =
  \inf_{ \mathbf{x} \in \mathcal{E}_v } \mu_{v,\mathbf{J}}(
  \mathbf{x}).
  $$
\end{theorem}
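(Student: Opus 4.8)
The inclusion $\mathcal{E}_v\subseteq\mathcal{B}_v$ gives at once $\inf_{\mathbf{x}\in\mathcal{B}_v}\mu_{v,\mathbf{J}}(\mathbf{x})\le\inf_{\mathbf{x}\in\mathcal{E}_v}\mu_{v,\mathbf{J}}(\mathbf{x})$, so the whole content of the theorem is the reverse inequality, and for that it is enough to show that every $\mathbf{B}_v\in\mathcal{B}_v$ can be replaced by some $\mathbf{x}\in\mathcal{E}_v$ with $\mu_{v,\mathbf{J}}(\mathbf{x})\le\mu_{v,\mathbf{J}}(\mathbf{B}_v)$; taking the infimum over $\mathbf{B}_v$ then concludes.

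The starting point is that $\lambda_{v,\mathbf{J},\mathbf{B}_v}$ only records which elements of $\mathcal{A}_v$ have been covered so far. Set $\mathcal{I}(k)=\{i<N_v+1:A_{i,v}\subseteq B_v(k)\}$. Since each $B_v(k)$ is finite, $\mathcal{I}(k)$ is finite; the sequence $(\mathcal{I}(k))_k$ is nondecreasing, $\mathcal{I}(0)=\emptyset$ because $|A_{i,v}|\ge2$, and $\bigcup_k\mathcal{I}(k)=\{i<N_v+1\}$ by property 3). Using $J_B=0$ for $B\notin\mathcal{A}_v$ and the telescoping identity (\ref{telescopio}), one gets $\sum_{l=0}^{k}\lambda_{v,\mathbf{J},\mathbf{B}_v}(l)=\exp\!\big(-\sum_{i\notin\mathcal{I}(k)}|J_{A_{i,v}}|\big)$ for $k\ge1$, so $\lambda_{v,\mathbf{J},\mathbf{B}_v}$ depends on $\mathbf{B}_v$ only through the chain $(\mathcal{I}(k))_k$, whereas the cardinalities $|B_v(k)|$ are only constrained from below, by $|B_v(k)|\ge|\{v\}\cup\bigcup_{i\in\mathcal{I}(k)}A_{i,v}|$.

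This suggests a two-step reduction. First, remove useless vertices: replacing $B_v(k)$ by $\{v\}\cup\bigcup_{i\in\mathcal{I}(k)}A_{i,v}$ leaves every $\mathcal{I}(k)$ (hence the whole law $\lambda_{v,\mathbf{J},\cdot}$) unchanged and can only decrease the cardinalities, so it does not increase $\mu_{v,\mathbf{J}}$. After discarding repeated consecutive terms — and, when $N_v<\infty$, prolonging the resulting finite list past $\bigcup_i A_{i,v}$ by adding one vertex at a time (irrelevant since the corresponding $\lambda$'s vanish) — one is left with a sequence $\mathbf{D}_v\in\mathcal{B}_v$ each of whose sets is a union of $A$-sets. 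Second, refine $\mathbf{D}_v$ by inserting, between two of its consecutive sets, the missing $A$-sets one at a time (skipping those adding no new vertex); this produces a sequence $\mathbf{x}$ having $\mathbf{D}_v$ as a subsequence, so $\mathbf{x}\in\mathcal{E}_v$ by construction and $\mathbf{D}_v\preceq\mathbf{x}$, whence Corollary \ref{fonda} yields $\mu_{v,\mathbf{J}}(\mathbf{x})\le\mu_{v,\mathbf{J}}(\mathbf{D}_v)\le\mu_{v,\mathbf{J}}(\mathbf{B}_v)$. Equivalently, one can package the whole estimate through (\ref{mediano})--(\ref{funz}): writing $\mu_{v,\mathbf{J}}(\mathbf{B}_v)=\mathbb{E}[g(X)]$ with $X\sim\hat\lambda_{v,\mathbf{J},\mathbf{B}_v}$ and $g$ the nondecreasing function (\ref{funz}), it is enough to exhibit $\mathbf{x}\in\mathcal{E}_v$ with $\hat\lambda_{v,\mathbf{J},\mathbf{x}}\preceq_{st}\hat\lambda_{v,\mathbf{J},\mathbf{B}_v}$, which by (\ref{telescopio}) reduces to the combinatorial fact that, by the time the sets of $\mathbf{x}$ reach size $\le n+1$, the $A$-sets covered by $\mathbf{x}$ carry at least as much of the weight $\sum_i|J_{A_{i,v}}|$ as those covered by $\mathbf{B}_v$ at the same threshold; this is exactly what an enumeration of $\{i<N_v+1\}$ chosen so that each $\mathcal{I}(k)$ is an initial segment guarantees.

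The step requiring the most care — and the real obstacle — is the interface between these operations and the definition of $\mathcal{E}_v$: one must verify that the refined sequence genuinely fulfils 1)--3) and, when $N_v<\infty$, that it reaches $\bigcup_i A_{i,v}$ in finitely many steps of the form prescribed in (\ref{cena}); and one must handle the first index $k=1$ with the cardinality bookkeeping, since the factor $2$ in $\lambda_{v,\mathbf{J},\mathbf{B}_v}(0)$ breaks the clean telescoping there and has to be accounted for separately. Once this is settled, the chain of inequalities above, together with $\mathcal{E}_v\subseteq\mathcal{B}_v$, gives the claimed identity.
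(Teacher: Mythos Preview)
Your argument is correct and rests on the same two ingredients as the paper's proof: the observation that $\lambda_{v,\mathbf{J},\mathbf{B}_v}$ depends on $\mathbf{B}_v$ only through the chain $(\mathcal{I}(k))_k$ of covered $A$-sets, and the monotonicity of $\mu_{v,\mathbf{J}}$ under refinement (Corollary~\ref{fonda}). The organization, however, is genuinely different. The paper builds an iterative sequence $(\mathbf{x}^{(n)})$ by repeatedly either deleting a set that introduces no new $A_{i,v}$ or inserting one $A_{i,v}$ at the first ``bad'' index; each step keeps $\mu$ nonincreasing, but the limit $\mathbf{y}\in\mathcal{E}_v$ is only reached after infinitely many steps, so Fatou's lemma is needed to conclude $\mu_{v,\mathbf{J}}(\mathbf{y})\le\liminf_n\mu_{v,\mathbf{J}}(\mathbf{x}^{(n)})$. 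Your two-pass construction (first shrink each $B_v(k)$ to $\{v\}\cup\bigcup_{i\in\mathcal{I}(k)}A_{i,v}$, then refine) produces an element of $\mathcal{E}_v$ directly, and since only finitely many $A$-sets are inserted between any two consecutive $D_v(k)$, $\mathbf{D}_v$ is a genuine subsequence of the final $\mathbf{x}$, so Corollary~\ref{fonda} applies in one stroke and no limiting argument is needed. That is a real simplification.

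Two remarks on the caveats you flag in your last paragraph. The ``factor $2$'' at $k=0$ is a non-issue: $\lambda_{v,\mathbf{J},\mathbf{B}_v}(0)=\exp(-2\sum_{B\ni v}|J_B|)$ is independent of the sequence, so it cancels identically when you compare $\mu$ (or $\hat\lambda$) for two sequences. And the verification that your refined sequence lies in $\mathcal{E}_v$ is routine once you note that $v\in A_{i,v}$ for every $i$, so $D_v(k)=\bigcup_{i\in\mathcal{I}(k)}A_{i,v}$ for $k\ge1$ and the enumeration that lists $\mathcal{I}(1)$, then $\mathcal{I}(2)\setminus\mathcal{I}(1)$, etc.\ (skipping redundant indices) does the job; in the case $N_v<\infty$ you correctly extend beyond $\bigcup_iA_{i,v}$ by arbitrary single vertices, which carry zero $\lambda$-mass. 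Neither point is an obstacle.
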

\begin{proof}
 First
we consider the case $N_v = \infty$.  To prove the theorem we will
show that for each $\mathbf{x} \in \mathcal{B}_v$ there exists $
\mathbf{y} \in \mathcal{E}_v$ such that $ \mu_{v, \mathbf{J}}
(\mathbf{y}) \leq \mu_{v, \mathbf{J}}(\mathbf{x}) $. Starting
from $ \mathbf{x} = (x(l))_{l \in \mathbb{N}} \in \mathcal{B}_v $,
we will construct a sequence of points $( \mathbf{x}^{(n)} \in
\mathcal{B}_v )_{n \in \mathbb{N}}$ such that $ \mathbf{x}^{(0)}
=\mathbf{x}$ and $ \lim_{n \to \infty}\mathbf{x}^{(n)} = \mathbf{y}
\in \mathcal{E}_v $. We will prove that, for $n \in \mathbb{N}$, $
\mu_{v, \mathbf{J}} ( \mathbf{x}^{(n+1) }) \leq  \mu_{v, \mathbf{J}}
( \mathbf{x}^{(n) })$ and then, by Fatou's lemma, $ \mu_{v,
\mathbf{J}}(\mathbf{y} ) \leq \liminf_{n \to \infty} \mu_{v,
\mathbf{J}} ( \mathbf{x}^{(n) })$, from which $ \mu_{v, \mathbf{J}}
(\mathbf{y}) \leq \mu_{v, \mathbf{J}} (\mathbf{x})$.

Let $\mathbf{x}^{(0)} = \mathbf{x}=(x(l))_{l \in \mathbb{N}} \in
\mathcal{B}_v $, we now give the rules to construct
$\mathbf{x}^{(1)}$. Define
$$
k_0=1+ \sup\{ l\in\mathbb{N}_+ : \exists(i_1, \ldots ,i_l)\hbox{
s.t. }x(j)=\bigcup_{k=1}^j A_{i_k, v} \hbox{ for any } j=1, \ldots,
l \},
$$
if $ k_0 = \infty$, then $ \mathbf{x}\in \mathcal{E}_v $ and there
is nothing to prove. If $  k_0 < \infty$ then define the finite sets
of indices
\begin{equation*}\label{I}
    I = \{ i \in \mathbb{N}_+: A_{i,v} \subset x(k_0 ) \} ,
\end{equation*}
\begin{equation*}\label{I-}
      I^- = \{ i \in \mathbb{N}_+: A_{i,v} \subset x(k_0 -1) \} .
\end{equation*}
If $ I= I^-  $ then eliminate $ x(k_0 ) $ from the sequence
obtaining $ x^{(1)}(l) = x(l ) $, for $ l \leq  k_0 -1$, $
x^{(1)}(l) = x(l +1) $, for $ l \geq  k_0 $. In this case $ \mu_{ v,
\mathbf{J}} (\mathbf{x}^{(0)})  =\mu_{ v, \mathbf{J}}
(\mathbf{x}^{(1)}) $.

If $  I\neq I^- $, consider  $ j = \min \{i : i \in  I \setminus I^-
\}$, define $ x^{(1)}(l) = x (l ) $, for $ l \leq k_0 -1$, $
x^{(1)}(k_0) = x (k_0-1 ) \cup A_{j , v  } $,  $ x^{(1)}(l) = x(l
-1) $, for $ l \geq  k_0+1 $. It is easy to check that the sequence
$ \mathbf{x}^{(1)}  $ verify the conditions 1), 2) and 3) defining
$\mathcal{B}_v$. In this case the sequence $\mathbf{x}^{(0)}$ is
less refined than $ \mathbf{x}^{(1)}$, therefore $ \mu_{v,
\mathbf{J}}(\mathbf{x}^{(0)} )  \geq \mu_{v,
\mathbf{J}}(\mathbf{x}^{(1)} )$, by Corollary \ref{fonda}.

We repeat the procedure to construct $ \mathbf{x}^{(n+1)} $ from $
\mathbf{x}^{(n)}$, for any $ n \in \mathbb{N_+}$. Obviously there
exists $ \lim_{n \to \infty } \mathbf{x}^{(n)}= \mathbf{y} \in
\mathcal{E}_v$.
% We define $ \hat{\lambda}^{(n)}_{ v, \mathbf{J}} $ as the probability mass function relative to
%$\mathbf{x}^{(n)}$.
Since $\hat{\lambda}_{ v,\mathbf{J}, \mathbf{z}}(0 )$ does not
depend on $ \mathbf{z} \in \mathcal{B}_v$ we set $\hat{\lambda}_{
v,\mathbf{J}}(0 )=\hat{\lambda}_{ v,\mathbf{J}, \mathbf{z}}(0 ) $,
therefore we can write
$$
\mu_{ v, \mathbf{J}}(\mathbf{y}) =-\hat{\lambda}_{v, \mathbf{J},
\mathbf{y}} (0) +\sum_{ i=1 }^{\infty} i \hat{\lambda}_{v,
\mathbf{J}, \mathbf{y}} (i) = -\hat{\lambda}_{ v, \mathbf{J}}(0)
+\sum_{ i=1 }^{\infty} \liminf_{n \to \infty}  i \hat{\lambda}_{
v,\mathbf{J}, \mathbf{x}^{(n)}} (i)
$$
$$ \leq
-\hat{\lambda}_{ v, \mathbf{J}}(0) + \liminf_{n \to \infty}  \sum_{
i=1 }^{\infty} i \hat{\lambda}_{ v,\mathbf{J}, \mathbf{x}^{(n)}} (i)
= \liminf_{n \to \infty} \mu_{ v, \mathbf{J}}(\mathbf{x}^{(n)})\leq
\mu_{ v, \mathbf{J}}(\mathbf{x}) ,
$$
where the first inequality follows by Fatou's lemma. The case $ N_v
< \infty $ is simpler and in a finite number $n_0$ of steps one
obtains that $ \mathbf{x}^{(n_0)}\in\mathcal{E}_v$.
\end{proof}

Now we state the theorem on the minimum that has a theoretical
flavor but we will see that in some important cases the point
realizing the minimum  can be explicitly calculated.

\begin{theorem}\label{minimo}
Let $\mathbf{J}\in\mathcal{J}$, $v \in \mathbb{Z}^d$, then
  $$
  \min_{ \mathbf{x} \in \mathcal{B}_v } \mu_{v,\mathbf{J}}( \mathbf{x}) =
  \min_{ \mathbf{x} \in \mathcal{E}_v } \mu_{v,\mathbf{J}}(
  \mathbf{x}).
  $$
\end{theorem}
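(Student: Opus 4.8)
The plan is to reduce everything, through Theorem~\ref{Thscelta}, to proving that $\inf_{\mathbf{x}\in\mathcal{E}_v}\mu_{v,\mathbf{J}}(\mathbf{x})$ is attained. Indeed, once we have produced $\mathbf{y}^{\star}\in\mathcal{E}_v$ with $\mu_{v,\mathbf{J}}(\mathbf{y}^{\star})=\inf_{\mathbf{x}\in\mathcal{E}_v}\mu_{v,\mathbf{J}}(\mathbf{x})$, then since $\mathcal{E}_v\subseteq\mathcal{B}_v$ and, by Theorem~\ref{Thscelta}, $\inf_{\mathbf{x}\in\mathcal{E}_v}\mu_{v,\mathbf{J}}(\mathbf{x})=\inf_{\mathbf{x}\in\mathcal{B}_v}\mu_{v,\mathbf{J}}(\mathbf{x})$, the point $\mathbf{y}^{\star}$ realizes both infima, so both are minima and they coincide. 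When $N_v<\infty$ the reduced statement is immediate: by (\ref{cena}) every $\mathbf{x}\in\mathcal{E}_v$ is described, up to the sets $B_v(l)$ with $l>\bar l$ which do not affect $\lambda_{v,\mathbf{J},\mathbf{x}}$ (hence not $\mu_{v,\mathbf{J}}$), by a tuple $(i_1,\dots,i_{\bar l})$ with $\bar l\le N_v$ and $i_k\le N_v$; there are finitely many such tuples, so $\mu_{v,\mathbf{J}}$ attains its minimum on $\mathcal{E}_v$.

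For $N_v=\infty$ I would argue by compactness. Fix a minimizing sequence $(\mathbf{x}^{(n)})_{n\in\mathbb{N}}$ in $\mathcal{E}_v$, say with $\mu_{v,\mathbf{J}}(\mathbf{x}^{(n)})\le m^{\star}+1$ for all $n$, where $m^{\star}:=\inf_{\mathcal{E}_v}\mu_{v,\mathbf{J}}$, and write $\mathbf{x}^{(n)}=(B_v^{(n)}(l))_{l\in\mathbb{N}}$ with $B_v^{(n)}(l)=\bigcup_{k=1}^{l}A_{i_k^{(n)},v}$. The aim is to extract, by a diagonal procedure, a subsequence along which, for every fixed $l$, the set $B_v^{(n)}(l)$ is eventually constant, and to pass to the resulting limit $\mathbf{y}=(B_v(l))_l$. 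The key a priori estimate is
\begin{equation*}
\sum_{l\ge1}|B_v^{(n)}(l)|\,\lambda_{v,\mathbf{J},\mathbf{x}^{(n)}}(l)=\mu_{v,\mathbf{J}}(\mathbf{x}^{(n)})+1\le m^{\star}+2 ,
\end{equation*}
together with the elementary bounds, valid at each level because $A_{i_l^{(n)},v}$ is a set containing $v$, with $J_{A_{i_l^{(n)},v}}\ne0$, not contained in $B_v^{(n)}(l-1)$,
\begin{equation*}
\lambda_{v,\mathbf{J},\mathbf{x}^{(n)}}(l)\ \ge\ |J_{A_{i_l^{(n)},v}}|\,\exp\Bigl(-2\sum_{B\colon v\in B}|J_B|\Bigr),\qquad |B_v^{(n)}(l)|\ \ge\ |A_{i_l^{(n)},v}| .
\end{equation*}
These force $|A_{i_l^{(n)},v}|\,|J_{A_{i_l^{(n)},v}}|$ to stay bounded uniformly in $n$ and $l$, while (\ref{sommab}) gives $\sum_{i}|A_{i,v}|\,|J_{A_{i,v}}|\le\sum_{B\colon v\in B}|B||J_B|<\infty$; hence at each level the choice $A_{i_l^{(n)},v}$ is confined to a controlled family, and after a further modification of the minimizing sequence that postpones the remaining, negligibly light, choices, only finitely many distinct sets occur at each level, enabling the diagonal extraction. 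The limit $\mathbf{y}$ then inherits properties 1) and 2) of $\mathcal{B}_v$ directly; property 3) must be checked for $\mathbf{y}$ (or restored by a completion that does not raise $\mu_{v,\mathbf{J}}$ in the limit); and $\mathbf{y}\in\mathcal{E}_v$ because for each $l$ the increment $B_v(l)\setminus B_v(l-1)$ equals $A_{i,v}\setminus B_v(l-1)$ for a suitable index $i$ (namely $i=i_l^{(n)}$ for large $n$ along the subsequence). Finally, exactly as in the proof of Theorem~\ref{Thscelta}, Fatou's lemma gives $\mu_{v,\mathbf{J}}(\mathbf{y})\le\liminf_{n}\mu_{v,\mathbf{J}}(\mathbf{x}^{(n)})=m^{\star}$, so $\mathbf{y}$ attains the infimum, completing the reduction and hence the proof.

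The hard part is precisely this compactness step for $N_v=\infty$: ruling out that a minimizing sequence escapes to infinity among the countably many sets $A_{i,v}$ at some level, and verifying that the limiting sequence still satisfies property 3) of $\mathcal{B}_v$ (and lies in $\mathcal{E}_v$). This is where summability (\ref{sommab}) enters in an essential way; without a uniform control of this type there would be no reason for the infimum to be attained rather than merely approached.
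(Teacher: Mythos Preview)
Your overall architecture---reduce to $\mathcal{E}_v$ via Theorem~\ref{Thscelta}, take a minimizing sequence, extract a diagonal subsequence, and pass to the limit with Fatou---matches the paper's. The gap is in the compactness step, and it is a real one.

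From $\sum_{l\ge1}|B_v^{(n)}(l)|\,\lambda^{(n)}(l)\le m^\star+2$ together with $\lambda^{(n)}(l)\ge c\,|J_{A_{i_l^{(n)},v}}|$ and $|B_v^{(n)}(l)|\ge|A_{i_l^{(n)},v}|$ you correctly obtain an \emph{upper} bound on $|A_{i_l^{(n)},v}|\,|J_{A_{i_l^{(n)},v}}|$. But an upper bound does not confine $A_{i_l^{(n)},v}$ to a finite subfamily of $\mathcal{A}_v$: summability (\ref{sommab}) only tells you that few sets have $|A|\,|J_A|$ \emph{large}, while infinitely many may have it arbitrarily small. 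So at a fixed level $l$ the sequence $(A_{i_l^{(n)},v})_n$ can wander among infinitely many sets, and there is no reason for any subsequence to stabilize there. Your phrase ``after a further modification \dots\ that postpones the remaining, negligibly light, choices'' is exactly where a genuine argument would have to be inserted, but none is given; and it is not clear how to carry out such a rearrangement within $\mathcal{E}_v$ while controlling $\mu_{v,\mathbf{J}}$. The verification of property~3) for the limit, which you flag as ``to be checked or restored'', is another point left open.

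The paper avoids this by reversing the roles of the two indices: rather than asking which $A_{i,v}$ appears at level $l$, it fixes $A_{k,v}\in\mathcal{A}_v$ and asks at which level $\bar l^{(n)}(k)$ it is first covered. From the single term $(\bar l(k)+1)\,\lambda^{(n)}(\bar l(k))\le c+1$ and the lower bound $\lambda^{(n)}(\bar l(k))\ge e^{-L}(e^{|J_{A_{k,v}}|}-1)$ one gets an explicit finite bound on $\bar l^{(n)}(k)$ depending only on $|J_{A_{k,v}}|$ and $c$. Now for each fixed $k$ the integers $\bar l^{(n)}(k)$ lie in a fixed finite set, the diagonal extraction is immediate, and property~3) for the limit comes for free since every $A_{k,v}$ is covered by level $\bar l(k)$. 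The lesson is that the coordinate giving compactness is the map $k\mapsto\bar l(k)$, not the map $l\mapsto i_l$.
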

\begin{proof}
The theorem is obviously true in the case of $N_v< \infty $, thus we
 consider  $N_v = \infty$. %Let the vertex $v$ be fixed in the proof.
 %We nowprove the existence of the minimum.
 First we prove the existence of $ \min_{ \mathbf{x} \in \mathcal{B}_v } \mu_{v,\mathbf{J}}(
 \mathbf{x})$. If for each $ \mathbf{x} \in \mathcal{B}_v $,  $ \mu_{v,\mathbf{J}}(
\mathbf{x}) =\infty$, there is nothing to show. Suppose that for $
\mathbf{x}= (x(l))_{l \in \mathbb{N}} $,
$$ \mu_{v,\mathbf{J}}( \mathbf{x}) \leq c <\infty.
$$
Define, for any $A_{k , v}  \in \mathcal{A}_v$,
\begin{equation*}\label{indicel}
    \bar{l}(k) = \min \{ l \in \mathbb{N}: x(l) \supset A_{k , v}
    \},
\end{equation*}
therefore
\begin{equation}\label{facileserve}
 A_{k , v} \not \subset  x( \bar{l}(k) -1 ), \,\,\,  A_{k , v}
 \subset  x( \bar{l}(k)  ).
\end{equation}
We will prove that for each $ k \in  \mathbb{N}_+ $
\begin{equation}\label{funzfin}
\bar{l}(k) \leq %f(c, k) :=
\frac{c+1}{ e^{-L+ |J_{A_{k,v}}|} - e^{-L}} \vee 2,
\end{equation}
where $ L = \sum_{ B: v \in B } |J_B|$.

 Since $ \mu_{v,\mathbf{J}}( \mathbf{x}) \leq c$, then
 \begin{equation}\label{unsoloterm}
( \bar{l}(k)+1) \lambda_{v , \mathbf{J}, \mathbf{x}} ( \bar{l}(k)
)\leq \sum_{l=1}^{\infty}|x(l )|   \lambda_{v , \mathbf{J},
\mathbf{x}}(l)=\mu_{v,\mathbf{J}}( \mathbf{x}) +1\leq c+1,
\end{equation}
where the first inequality is true because we have only taken a term
of the sum and used that $ |x(l )| \geq l+1 $.

 \comment{ By
contradiction suppose that
\begin{equation}\label{assurdaf}
\bar{l}(k)>\frac{c+1}{ e^{-L+ |J_{A_{k,v}}|} - e^{-L}} \vee 2  \geq
2 .
\end{equation}
}

Let $ S_k=\sum_{B :v \in B, B \not \subset x (k-1)}  |J_B|$. Since $
S_k \in [0, L]$, then
\begin{equation}\label{piuche}
    e^{-L+ |J_{A_{k,v}}|} - e^{-L} \leq e^{-S_k+ |J_{A_{k,v}}|} -
e^{-S_k}\leq \lambda_{v , \mathbf{J}, \mathbf{x}} ( \bar{l}(k) ),
\end{equation}
where the last inequality follows from (\ref{facileserve}) and from
the expression of $\lambda_{v , \mathbf{J}, \mathbf{x}} (l)$ (see
(\ref{lambda})) for $l \geq 2$. By (\ref{unsoloterm}) and
(\ref{piuche}) we obtain
\begin{equation*}\label{weare}
( \bar{l}(k)+1)  (e^{-L+ |J_{A_{k,v}}|} - e^{-L})\leq c+1,
\end{equation*}
which implies  (\ref{funzfin}).

Now define a sequence $ (\mathbf{x}^{(n)} \in \mathcal{B}_v: n \in
\mathbb{N}   )$, such that $  \mathbf{x}^{(0)} = \mathbf{x}$, the
birth-death expectations $ \mu_{v,\mathbf{J}}( \mathbf{x}^{(n)}) $
are nonincreasing in $n $ and $ \lim_{n \to \infty}
\mu_{v,\mathbf{J}}( \mathbf{x}^{(n)}) = \inf_{ \mathbf{x } \in
\mathcal{B}_v } \mu_{v,\mathbf{J}}( \mathbf{x})$. Let $
\bar{l}^{(n)}(k)= \min \{ l \in \mathbb{N}: x^{(n)}(l) \supset A_{k
, v} \} $ the analogous of $ \bar{l}(k)$.
By (\ref{funzfin}) there exists a subsequence $ ( \mathbf{x}_1^{(n)}
\in \mathcal{B}_v: n \in \mathbb{N}   )$ of $ (\mathbf{x}^{(n)} \in
\mathcal{B}_v: n \in \mathbb{N}   ) $ such that $ \bar{l}^{(n)}(1) $
is constant in $n$. For each $ h \in \mathbb{N}_+$ there exists a
subsequence $ ( \mathbf{x}_h^{(n)} \in \mathcal{B}_v: n \in
\mathbb{N}   )$ of $   (\mathbf{x}_{h-1}^{(n)} \in \mathcal{B}_v: n
\in \mathbb{N}   ) $  such that $ \bar{l}^{(n)}(h) $ is constant in
$n$. Therefore, by using diagonal method, the sequence $ (
\mathbf{x}_n^{(n)} \in \mathcal{B}_v: n \in \mathbb{N}   )$ admits
limit, i.e.
\begin{equation*}\label{dovemin}
    \lim_{n \to \infty} \mathbf{x}_n^{(n)} = \mathbf{y } \in
    \mathcal{B}_v.
\end{equation*}
 The sequence $ ( \mathbf{x}_n^{(n)} \in
\mathcal{B}_v: n \in \mathbb{N} )$ is a subsequence of the initial
one $ (\mathbf{x}^{(n)} \in \mathcal{B}_v: n \in \mathbb{N} ) $;
hence $ \mu_{v,\mathbf{J}}( \mathbf{x}_n^{(n)}) $ is nonincreasing
in $n $ and $ \lim_{n \to \infty} \mu_{v,\mathbf{J}}(
\mathbf{x}_n^{(n)}) = \inf_{ \mathbf{x } \in \mathcal{B}_v }
\mu_{v,\mathbf{J}}( \mathbf{x})$. Now, by Fatou's Lemma,
$$
\mu_{ v, \mathbf{J}}(\mathbf{y}) =-\hat{\lambda}_{v, \mathbf{J},
\mathbf{y}} (0) +\sum_{ i=1 }^{\infty} i \hat{\lambda}_{v,
\mathbf{J}, \mathbf{y}} (i) = -\hat{\lambda}_{ v, \mathbf{J}}(0)
+\sum_{ i=1 }^{\infty} \liminf_{n \to \infty}  i \hat{\lambda}_{
v,\mathbf{J}, \mathbf{x}_n^{(n)}} (i)
$$
$$ \leq
-\hat{\lambda}_{ v, \mathbf{J}}(0) + \liminf_{n \to \infty}  \sum_{
i=1 }^{\infty} i \hat{\lambda}_{ v,\mathbf{J}, \mathbf{x}_n^{(n)}}
(i) = \liminf_{n \to \infty} \mu_{ v,
\mathbf{J}}(\mathbf{x}_n^{(n)}) = \inf_{ \mathbf{x } \in
\mathcal{B}_v } \mu_{v,\mathbf{J}}( \mathbf{x}) .
$$
Therefore
\begin{equation}\label{molo17}
\mu_{v,\mathbf{J}}( \mathbf{y})= \inf_{ \mathbf{x } \in
\mathcal{B}_v } \mu_{v,\mathbf{J}}( \mathbf{x}),
\end{equation}
  that implies the
existence of the minimum on $ \mathcal{B}_v$.

For $ \mathbf{y} \in
\mathcal{B}_v  $ as in (\ref{molo17}), there exists $ \mathbf{z} \in
\mathcal{E}_v $ such that $  \mu_{v,\mathbf{J}}( \mathbf{y}) \geq
\mu_{v,\mathbf{J}}( \mathbf{z})$ (see the proof of Theorem
\ref{Thscelta}). It is immediately   seen that $ \mu_{v,\mathbf{J}}(
\mathbf{z}) = \inf_{ \mathbf{x } \in \mathcal{E}_v }
\mu_{v,\mathbf{J}}( \mathbf{x})$.
\end{proof}

In some cases it is possible to identify the sequence $ \mathbf{z}_v
\in \mathcal{B}_v$ such that $  \mu_{v,\mathbf{J}}( \mathbf{z}_v) =
\min_{ \mathbf{x } \in \mathcal{B}_v } \mu_{v,\mathbf{J}}(
\mathbf{x}) $.  We present a  result on  the Ising models in which
it occurs. Let $ \mathcal{J}_2 \subset   \mathcal{J}$ be the set of
the interactions such that
\begin{equation*}\label{j2}
    J_B \neq 0 \Rightarrow |B | =2 .
\end{equation*}
Note that under condition (\ref{sommab}) one gets
\begin{equation*}
    \lim_{\Lambda \uparrow \mathbb{Z}^d } \sup_{ u \not \in \Lambda} J_{\{
    v,u\}} =0\hbox{ for all }v \in \mathbb{Z}^d,
\end{equation*}
therefore it can be written, for a fixed vertex $v$,
\begin{equation}\label{ordinevert}
    | J_{\{ v, w_1 \}}| \geq  | J_{\{ v, w_2 \}}| \geq \ldots \geq  | J_{\{ v, w_i
    \}}| \geq
    \ldots
\end{equation}
where $ \bigcup_{i=1}^{\infty} w_i =\mathbb{Z}^d$ and $ w_i \neq w_j
$ if $ i \neq j$.
 Let  $ ( w_n \in \mathbb{Z}^d : n \in
\mathbb{N}_+)$ be the sequence written in (\ref{ordinevert}) and
define  $ \mathbf{z}_v \in \mathcal{B}_v $ such that, for $ i \in
\mathbb{N}_+$,
\begin{equation*}\label{aqoll}
    z_v ( i ) =\{ v, w_1 , \ldots , w_{i}\} .
\end{equation*}
Let also $ J^{(n)}_v =| J_{\{ v, w_n \}}| $ for $ n\in \mathbb{N}_+
$. We remark that, given an interaction $\mathbf{J } \in
\mathcal{J}_2$, the sequence $ ( w_n \in \mathbb{Z}^d : n \in
\mathbb{N}_+)$ is not in general unique.
\begin{theorem}\label{ThIsing}
    Let $ \mathbf{J} \in   \mathcal{J}_2$, for each $ v \in
    \mathbb{Z}^d$,
     \begin{equation}\label{Isingmin}
 \mu_{v,\mathbf{J}}( \mathbf{z}_v) =
\min_{ \mathbf{x } \in \mathcal{B}_v } \mu_{v,\mathbf{J}}(
\mathbf{x}) =-2e^{-2\sum_{i=1}^\infty
J^{(i)}_v}+e^{-\sum_{i=2}^\infty J^{(i)}_v}+\sum_{l=2}^\infty
l\left(e^{-\sum_{i=l+1}^\infty J^{(i)}_v}-e^{-\sum_{i=l}^\infty
J^{(i)}_v}\right).
\end{equation}
\end{theorem}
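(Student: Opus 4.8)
The plan is to reduce the optimization to $\mathcal{E}_v$, parametrize $\mathcal{E}_v$ by rearrangements of the couplings, obtain a closed ``tail'' formula for $\mu_{v,\mathbf{J}}$, and then conclude by an elementary rearrangement argument together with a telescoping computation for the explicit value. First I would note that, by Theorem \ref{Thscelta}, $\inf_{\mathbf{x}\in\mathcal{B}_v}\mu_{v,\mathbf{J}}(\mathbf{x})=\inf_{\mathbf{x}\in\mathcal{E}_v}\mu_{v,\mathbf{J}}(\mathbf{x})$, and since $\mathbf{z}_v\in\mathcal{E}_v\subseteq\mathcal{B}_v$ it suffices to prove $\mu_{v,\mathbf{J}}(\mathbf{z}_v)\le\mu_{v,\mathbf{J}}(\mathbf{x})$ for every $\mathbf{x}\in\mathcal{E}_v$; this gives both the minimality over $\mathcal{B}_v$ and over $\mathcal{E}_v$. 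Since $\mathbf{J}\in\mathcal{J}_2$, every $A_{i,v}$ is a pair $\{v,u\}$ with $J_{\{v,u\}}\neq0$; using conditions 2) and 3) defining $\mathcal{B}_v$ one checks that the elements of $\mathcal{E}_v$ are exactly the sequences $x(l)=\{v\}\cup\{w_{\pi(1)},\dots,w_{\pi(l)}\}$ ($l\ge0$) where $\pi$ ranges over the bijections of the index set (in the case $N_v<\infty$ one pads the sequence $(w_i)$ with the remaining vertices, for which $J^{(i)}_v=0$ and $\lambda_v(l)=0$, so this case is covered uniformly). In particular $|x(l)|=l+1$ for every $\mathbf{x}\in\mathcal{E}_v$, and $\mathbf{z}_v$ is the sequence attached to $\pi=\mathrm{id}$.

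Next I would set $T_n(\pi)=\sum_{i>n}J^{(\pi(i))}_v=L-\sum_{i=1}^n J^{(\pi(i))}_v$, where $L=\sum_{i\ge1}J^{(i)}_v=T_0$ does not depend on $\pi$. By the telescoping identity (\ref{telescopio}) one has $\sum_{l=0}^n\lambda_{v,\mathbf{J},\mathbf{x}}(l)=e^{-T_n(\pi)}$ for $n\ge1$ and $\lambda_{v,\mathbf{J},\mathbf{x}}(0)=e^{-2L}=e^{-2T_0}$. Writing $\mu_{v,\mathbf{J}}(\mathbf{x})+1=\sum_{l\ge1}(l+1)\lambda_v(l)=\sum_{l\ge1}\lambda_v(l)+\sum_{l\ge1}l\,\lambda_v(l)$ and exchanging the order of summation in $\sum_{l\ge1}l\,\lambda_v(l)=\sum_{m\ge1}\sum_{l\ge m}\lambda_v(l)$ (Tonelli, all terms $\ge0$), one evaluates $\sum_{l\ge1}\lambda_v(l)=1-e^{-2T_0}$ and $\sum_{l\ge1}l\,\lambda_v(l)=(1-e^{-2T_0})+\sum_{n\ge1}\bigl(1-e^{-T_n(\pi)}\bigr)$, separating the $m=1$ term (which uses $\lambda_v(0)$) from $m\ge2$. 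This gives the master identity, valid in $[0,\infty]$,
\begin{equation*}
\mu_{v,\mathbf{J}}(\mathbf{x})=1-2e^{-2T_0}+\sum_{n=1}^{\infty}\bigl(1-e^{-T_n(\pi)}\bigr).
\end{equation*}
Since $(J^{(i)}_v)_{i}$ is non-increasing and $\{\pi(1),\dots,\pi(n)\}$ is a set of $n$ distinct indices, $\sum_{i=1}^n J^{(\pi(i))}_v\le\sum_{i=1}^n J^{(i)}_v$, hence $T_n(\pi)\ge T_n(\mathrm{id})$ for every $n$; as $t\mapsto1-e^{-t}$ is increasing, the master identity yields $\mu_{v,\mathbf{J}}(\mathbf{x})\ge\mu_{v,\mathbf{J}}(\mathbf{z}_v)$, which together with Theorem \ref{Thscelta} proves the first equality in (\ref{Isingmin}).

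Finally I would specialize the master identity to $\pi=\mathrm{id}$, where $T_n(\mathrm{id})=\sum_{i\ge n+1}J^{(i)}_v$, and convert $1-2e^{-2T_0}+\sum_{n\ge1}(1-e^{-T_n(\mathrm{id})})$ into the closed form $-2e^{-2\sum_{i\ge1}J^{(i)}_v}+e^{-\sum_{i\ge2}J^{(i)}_v}+\sum_{l\ge2}l\bigl(e^{-\sum_{i\ge l+1}J^{(i)}_v}-e^{-\sum_{i\ge l}J^{(i)}_v}\bigr)$ by summation by parts, the boundary term $n\bigl(1-e^{-T_n(\mathrm{id})}\bigr)$ vanishing because $1-e^{-T_n(\mathrm{id})}$ is non-increasing and summable (if it is not summable, both expressions equal $+\infty$). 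The conceptual core of the argument is the rearrangement principle ``put the largest couplings first to make every tail $T_n$ smallest''; the only delicate points are the edge-case bookkeeping ($l=0$ versus $l\ge1$, $m=1$ versus $m\ge2$) in the Tonelli step and the control of the boundary term in the concluding summation by parts, which I expect to be the fiddliest — though entirely routine — part of the write-up.
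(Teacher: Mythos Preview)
Your proof is correct and rests on the same rearrangement inequality as the paper's, but the two are packaged differently. The paper does not pass through Theorem~\ref{Thscelta}; instead it reduces to the maximal elements of $\mathcal{B}_v$ via Corollary~\ref{fonda} and then shows directly, at the level of cumulative distribution functions, that $\lambda_{v,\mathbf{J},\mathbf{z}_v}\preceq_{st}\lambda_{v,\mathbf{J},\mathbf{x}}$ for every maximal $\mathbf{x}$: the CDF inequality $\sum_{l\le n}\lambda_{v,\mathbf{J},\mathbf{z}_v}(l)\ge\sum_{l\le n}\lambda_{v,\mathbf{J},\mathbf{x}}(l)$ unwinds exactly to $\sum_{i\le n}J^{(i)}_v\ge\sum_{i\le n}|J_{\{v,u_i\}}|$, which is your rearrangement step. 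The explicit value in~(\ref{Isingmin}) is then dismissed as ``elementary calculations''. Your route instead derives the tail-sum identity $\mu=1-2e^{-2T_0}+\sum_{n\ge1}(1-e^{-T_n(\pi)})$ and compares the series termwise over permutations~$\pi$, which is the same inequality in a different guise. The advantage of your approach is that the closed formula falls out of the same computation via summation by parts; the advantage of the paper's is that it stays inside the stochastic-ordering machinery already built in Section~\ref{sec3} and sidesteps the Tonelli/Abel bookkeeping you (rightly) flag as the fiddliest part.
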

\begin{proof}
First notice that $  \mathbf{z}_v $ is a maximal element of $
\mathcal{B}_v $ and $ \mathbf{z}_v  \in  \mathcal{E}_v$, moreover
$\lambda_{ v, \mathbf{J}, \mathbf{z}_v} = \hat{\lambda}_{ v,
\mathbf{J}, \mathbf{z}_v}$. To prove the theorem we will show that,
for each maximal element $\mathbf{x } \in \mathcal{B}_v  $, we
obtain $ \lambda_{ v, \mathbf{J}, \mathbf{z}_v} \preceq_{st}
\lambda_{ v, \mathbf{J}, \mathbf{x}}$. Hence by Theorem \ref{pr} we
will get the first equality of(\ref{Isingmin}).

Let $ \mathbf{x}$ be a maximal element of $ \mathcal{B}_v $, as in
the proof of Theorem \ref{pr}, we show that for any $ n \in
\mathbb{N}$
\begin{equation}\label{tardi}
\sum_{l=0}^n  \lambda_{ v, \mathbf{J},
\mathbf{z}_v}(l)\geq\sum_{l=0}^n {\lambda}_{ v, \mathbf{J},
\mathbf{x}}(l) ,
\end{equation}
which guarantees the stochastic ordering. The l.h.s. of
(\ref{tardi}) is
$$
\exp{ \left (-\sum \nolimits_{ B: v \in B, B \not \subset z_v
(n)}|J_B| \right ) } =\exp{ \left (-\sum_{l=n+1}^{\infty}|J_{\{v,
w_l \}}| \right ) }.
$$
Consider the sequence of  distinct vertices $ \{ u_n \in
\mathbb{Z}^d : n \in \mathbb{N}_+\}$ such that $u_0 = \{v\}$ and $
u_n = x(n) \setminus x(n-1)$. Since the r.h.s. of (\ref{tardi}) can
be written
$$
\exp{ \left (-\sum\nolimits_{B: v \in B, B \not \subset x (n)}|J_B|
\right ) } =\exp{ \left (-\sum_{l=n+1}^{\infty}|J_{\{v, u_l \}}|
\right ) },
$$
then inequality (\ref{tardi}) is equivalent to
$$
\exp{ \left (-\sum_{l=n+1}^{\infty}|J_{\{v, w_l \}}| \right ) }\geq
\exp{ \left (-\sum_{l=n+1}^{\infty}|J_{\{v, u_l \}}| \right ) }
$$
or
$$
\exp{ \left (\sum_{l=1}^{n}|J_{\{v, w_l \}}| \right ) }\geq \exp{
\left (\sum_{l=1}^{n}|J_{\{v, u_l \}}| \right ) }
$$
that is obviously true by using the definition of sequence $ \{ w_n\}$
(see (\ref{ordinevert})).

The second equality in (\ref{Isingmin}) follows by elementary
calculations.
\end{proof}

\begin{rem}\label{esplicito}
If for any $ v \in \mathbb{Z}^d$ the number $ N_v $ is \emph{small}
and if it can be proved that for some
$(\mathbf{x}_v\in\mathcal{B}_v)_{v\in\mathbb{Z}^d}$
\begin{equation}\label{luglio}
\lim_{\Lambda\uparrow\mathbb{Z}^d} \sup_{v \notin \Lambda} \mu_{v,
\mathbf{J}} (\mathbf{x}_{v} )<0,
\end{equation}
then the perfect simulation algorithm can be run. Proving
(\ref{luglio}) is a little easier than proving condition (H1), and
in both cases it should be done \emph{a priori}. In the backward
sketch procedure a random vertex $w$ is selected with probability
(\ref{sortver}), now the algorithm calculates all the $
\widehat{\mathbf{x}}_w$'s belonging to $\emph{arg}\min_{ \mathbf{x}
\in \mathcal{E}_w } \mu_{w, \mathbf{J}} ( \mathbf{x}) $ with a
finite number of elementary operations because, for any
$\mathbf{x}\in \mathcal{E}_w $, $\lambda_{w,\mathbf{J}, \mathbf{x}
}(l)$ must be calculated for $l = 1, \ldots , N_w$ and also all the
sums involved in the definition of $ \lambda_{w,\mathbf{J},
\mathbf{x} } $ and of $ \mu_{w,\mathbf{J} } (\mathbf{x})$ are
finite.
%Moreover the $\mathbf{x}$'s in $\mathcal{E}_w$ to be
%considered are at most $N_w !$.
Moreover $|\mathcal{E}_w|\leq N_w!$.
By comparing the finite list (having
at most $N_w!$ elements) of $ \mu_{w,\mathbf{J} } (\mathbf{x})$ with
$ \mathbf{x}\in \mathcal{E}_w$,  the algorithm finds all the
 $\widehat{\mathbf{x}}_{w}$'s belonging to $  \mathcal{E}_w$ such that $ \mu_{w,
\mathbf{J}} ( \widehat{\mathbf{x}}_{w} ) =  \min_{ \mathbf{x} \in
\mathcal{E}_w} \mu_{w, \mathbf{J}} ( \mathbf{x})  $. This procedure
is repeated for all the selected vertices, which are almost surely
finite. Hence the problem is computable and the previous procedure
is really an algorithm. The computability is guaranteed by the fact
that $N_v$ is finite, further the algorithm runs in reasonable time
if $N_v$ is small.

If $N_v$ is \emph{large} or equal to \emph{infinity}, if one
succeeds in calculating a
$(\mathbf{x}_v\in\mathcal{B}_v)_{v\in\mathbb{Z}^d}$ such that
condition (\ref{luglio}) is satisfied, then the algorithm can use
this particular choice.

%%%%%%%%%%%%% nuovo per il referee che afferma che questa parte e' teorica
The case $ N_v = \infty $ is in some sense theoretical but there are
models in which a change of the first terms of a given  sequence $
\mathbf{B}_v $ may produce a \emph{sensitive improvement} for $
\mu_{v , \mathbf{J}}( \mathbf{B}_v ) $, i.e. it goes from positive
values to negative values.
\comment{ that can be dealt under general
conditions. We now present a class of infinite-range models in
which, using the $\varepsilon$-optimality, we can find, for a given
$ \varepsilon
>0$, a sequence $ \mathbf{B}_v $ such that $ \mu_{v, \mathbf{J}} (
\mathbf{B}_v) - \inf_{ \mathbf{x} \in \mathcal{E}_v }
\mu_{v,\mathbf{J}}( \mathbf{x}) \leq \varepsilon $. } For simplicity
of the exposition we only consider  translation invariant models.
 Let us assume that, for a fixed $ \mathbf{\widehat{B}}_v \in\mathcal{B}_v $,
 \begin{equation}\label{sommafin}
    \sum_{k=1}^{\infty} | \widehat{B}_v( k) | \lambda_{ v , \mathbf{J},
    \mathbf{\widehat{B}}_v} ( k) < \infty .
\end{equation}
%Obviously  $ \lim_{N \to \infty}  \sum_{k=N}^{\infty} |\widehat{B}_v( k) | \lambda_{ v , \mathbf{J},\mathbf{\widehat{B}}_v} ( k) =0 $.
Hence, for $ N\in \mathbb{N}$, we
consider the finite subset $ \Upsilon_{N} (\mathbf{\widehat{B}}_v )$
of $ \mathcal{B}_v$ made by all the sequences verifying these rules:
\begin{itemize}
    \item  $ \widetilde{B}_v (0)
    \subset \widetilde{B}_v (1)  \subset \ldots \subset  \widetilde{B}_v (L) =\widehat{B}_v (N)
    $, with $L = | \widehat{B}_v (N) | -1$;
   \item  $ \widetilde{B}_v (L+i) = \widehat{B}_v (N+i)$, for $ i
\geq 1$.
\end{itemize}
It is easy to calculate
$$
\min_{ \mathbf{x} \in  \Upsilon_{N} (\mathbf{\widehat{B}}_v )}
\mu_{v, \mathbf{J}} ( \mathbf{x}) ,
$$
and for each $ \mathbf{x} \in  \Upsilon_{N} (\mathbf{\widehat{B}}_v
) $ there exists $  \mathbf{y} \in  \Upsilon_{N+1}
(\mathbf{\widehat{B}}_v )$ that is  more refined than $ \mathbf{x}
$. Therefore increasing $N$ the minimum in the previous formula
can only decrease, via Theorem \ref{pr}.

We conclude with a more explicit example. Let $d=2$, consider $
  \mathbf{{B}^*}_v$ (the sequence of balls
chosen in \cite{GLO}), suppose that (\ref{sommafin}) holds and take
$ N=1$. Note that $\mathbf{{B}^*}_v $ is less refined than each
sequence in $ \Upsilon_1 (\mathbf{{B}^* }_v )$. With simple
calculations we obtain
\begin{equation}\label{infinitomig}
\mu_{v , \mathbf{J}} ( \mathbf{ \widetilde{B} }_v ) =\mu_{v ,
\mathbf{J}} ( \mathbf{B^*}_v )-\sum_{i =1}^{3} ( |  B^*_v (1)   | -
|\widetilde{B}_v (i) | ) \lambda_{ v, \mathbf{J}  ,
\mathbf{\widetilde{B}}_v}(i) ,
\end{equation}
where $  \mathbf{ \widetilde{B} }_v  \in \Upsilon_1 (\mathbf{{B}^*
}_v )$.

 Notice that $  |  B^*_v (1)   |=5  $ and $ ( |  B^*_v (1)   |
- |\widetilde{B}_v (i) | )\geq 1 $ for $ i=1,2,3$.  Now one can take
$ \widetilde{B}_v (1), \widetilde{B}_v (2), \widetilde{B}_v (3),
\widetilde{B}_v (4)$ among the $4!$ possible choices selecting the
one which maximizes the sum in (\ref{infinitomig}). For some
interactions $\mathbf{J} \in \mathcal{J}$ the next inequalities  $
\mu_{v , \mathbf{J}} ( \mathbf{B^*}_v )>0$ and $\mu_{v , \mathbf{J}}
( \mathbf{ \widetilde{B} }_v ) <0 $ hold.
 \comment{ If
$N_v$ is \emph{large} or equal to \emph{infinity}, we provide a good
choice $ \mathbf{x}_v$ that in general is not the best one. Let us
consider the following sequence of elements of $ \mathcal{A}_v$
\begin{equation}\label{ordseq}
    A_{(1), v}, \,\, A_{(2), v}, \,\, A_{(3), v}, \,\, \ldots
\end{equation}
such that, for $i $, $j \in \mathbb{N}$,
\begin{itemize}
    \item[1.]  $ A_{(i) , v }\not\subset \bigcup_{l =1}^{i-1}A_{(l) , v } $;
    \item[2.] $ |J_{A_{(i), v}}| \geq |J_{A_{(i+1), v}}|
    $;
    \item[3.] if $ |J_{A_{(i), v}}| = |J_{A_{(i+1), v}}|$ then $ |{A_{(i), v}}| \leq |{A_{(i+1),
    v}}|$;
 \item[4.] $\bigcup_{i < N_v +1} A_{(i), v} = \bigcup_{i < N_v +1} A_{i,
 v}$.
\end{itemize}
It is easy to see that a sequence of this kind can be constructed.
Indeed the hypothesis (\ref{sommab})on the space of interactions
$\mathcal{J}$ guarantees that 2. and 3. are well posed. Moreover we
choose $\mathbf{x}_v \in \mathcal{B}_v$ as follows $ x_v (0) = \{v
\}$ and $  x_v (k) = \bigcup_{i =1}^k A_{(i), v}$ for any $k\in
\mathbb{N}_+$. Property 2) of $\mathcal{B}_v$ follows by 1. and
property 3) of $\mathcal{B}_v$ follows by 4. This choice facilitates
the elements of the sequence $\{\lambda_v(k)\}_{k\geq0}$ to be large
for small values of $k$, taking the sum
$\lambda_v(0)+\lambda_v(1)+...$ close to 1 quickly. Hence the
corresponding $\mu_{v, \mathbf{J}}(\mathbf{x}_v)$ will be inclined
to assume a small value. In some way point 3. takes into account
that we want to increase the regions $ x_v (k ) $ slowly with the
index $k$ to obtain a small value of $\mu_{v,
\mathbf{J}}(\mathbf{x}_v)$.}
\end{rem}

\section{A general result on the extinction  of a
population}\label{sec4}

The following theorem gives a generalization of the extinction result on
Galton-Watson's process and it applies to processes that behave like
a supermartingale when they assume large values.

In the following theorem we will write for brevity of notation
$\mathbf{i}_h^k$ in place of the vector $(i_h,...,i_k)$, for $h\leq
k$. Furthermore, the equalities or inequalities between conditioned
probabilities have to be considered valid only if the conditioning
events have positive measure.

For each null event $ A$ we pose $
\mathbb{P}(\cdot | A) =1 $, in this way we can write the infimum in
place of the essential infimum.
%Inoltre sia $\mathfrak{M}_k=\{\mathbf{i}_0^k\in\mathbb{N}^{k+1}:\mathbb{P}(\mathbf{X}_0^k =\mathbf{i}_0^k)>0\}$.

\begin{theorem}\label{prop:1}
Let $\mathbf{X}=(X_{n}:n\in\mathbb{N})$ be a stochastic process over
$\mathbb{N}$. Suppose that there exists $N\in\mathbb{N}$ such that
the following relations hold:
\begin{itemize}
   \item[1)] $ \mathbb{P}(X_{n+1} =0 | X_n =0) =1$,  for  $n \in \mathbb{N}$;
 \item[2)] for $i \leq N$ there exists $ n_i \in \mathbb{N}_+$ such that
 $${q}_i ={\inf}_{m \in \mathbb{N},i_0,
\ldots ,i_{m-1}\in \mathbb{N}_+}
 \mathbb{P} ( X_{m+n_i }=0 |X_0=i_0, \ldots ,X_m=i_m)>0,\ i_m=i;$$
    \item[3)]
$\label{eq:f2}\mathbb{E}(X_{n+1}|X_0=i_0, \ldots ,X_n=i_n )\leq i_n$
a.s. for $ n \in \mathbb{N}$, $i_0, \ldots ,i_{n-1}\in \mathbb{N}$,
$i_n> N$;
    \item[4)] $
\label{eq:f3} p_{i}={\inf}_{m \in \mathbb{N},i_0, \ldots ,i_{m-1}\in
\mathbb{N}_+} \mathbb{P}(X_{m+1}\neq i|X_0=i_0, \ldots ,X_m=i_m)>0$,
$i_m=i>N$.
\end{itemize}
Then
$$
\lim_{n \to \infty}X_n=0\, \,a.s.
$$
\end{theorem}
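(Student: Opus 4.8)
The plan is to prove that the process almost surely hits $0$; since $0$ is absorbing by 1), this automatically gives $\lim_n X_n=0$ a.s. Throughout I set $n^\ast=\max_{1\le i\le N}n_i$ and $q=\min_{1\le i\le N}q_i>0$. Because $0$ is absorbing, $\{X_{m+n_i}=0\}\subseteq\{X_{m+n^\ast}=0\}$, so 2) upgrades to the uniform bound $\mathbb{P}(X_{m+n^\ast}=0\mid X_0=i_0,\dots,X_m=i_m)\ge q$ for every $m$ and every history with $i_0,\dots,i_{m-1}\ge 1$ and $1\le i_m\le N$. I will also use repeatedly that, by 1), $\{X_m\ge 1\}\subseteq\{X_j\ge 1\ \forall j\le m\}$: a positive state can only be preceded by positive states, so the positivity hypothesis on the histories appearing in 2) and 4) never has to be checked by hand.

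\textbf{Step 1: the process cannot remain above $N$ forever.} The goal of this step is $\mathbb{P}(\exists m:\,X_n>N\ \forall n\ge m)=0$, i.e.\ that $\{n:X_n\le N\}$ is a.s.\ infinite. Fix $m$ and let $S_m=\inf\{n\ge m:X_n\le N\}$. The stopped process $Y_n:=X_{n\wedge S_m}$, $n\ge m$, is a nonnegative supermartingale for the natural filtration: on $\{S_m\le n\}$ one has $Y_{n+1}=Y_n$, while on $\{S_m>n\}$ one has $X_n>N$ and 3) gives $\mathbb{E}[X_{n+1}\mid\mathcal F_n]\le X_n$. Conditioning on $\{X_m=c\}$ for each $c$ makes $Y_m$ bounded (hence the supermartingale genuinely $L^1$), so $Y_n$ converges a.s.; on $\{S_m=\infty\}$ this says $X_n$ converges to a finite limit, and an $\mathbb N$-valued convergent sequence is eventually constant, equal to some random $j>N$. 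But iterating 4) shows that for fixed $m'$ and $j>N$, $\mathbb{P}(X_{m'}=\dots=X_{m'+k}=j)\le(1-p_j)^k\to0$, so the event ``$X_n$ is eventually constant at some value $>N$'' is null. Hence $\mathbb{P}(S_m=\infty)=0$, and a union over $m$ proves the claim.

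\textbf{Step 2: reaching $0$.} By Step 1 the stopping times $\sigma_1=\inf\{n:X_n\le N\}$, $\sigma_{j+1}=\inf\{n\ge\sigma_j+n^\ast:X_n\le N\}$ are a.s.\ finite, so assume them finite everywhere. The crucial estimate is $\mathbb{P}(X_{\sigma_j+n^\ast}=0\mid\mathcal F_{\sigma_j})\ge q$: splitting over $\{\sigma_j=m\}$, on $\{X_m=0\}$ the event $\{X_{m+n^\ast}=0\}$ holds by absorption, while on $\{1\le X_m\le N\}$ the uniform form of 2) applies (the history is automatically positive), so $\mathbb{P}(X_{m+n^\ast}=0\mid\mathcal F_m)\ge q$ on $\{X_m\le N\}$, and summing over $m$ passes this to $\sigma_j$. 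Set $D_j=\{X_{\sigma_j+n^\ast}=0\}$. Since $\sigma_{j+1}\ge\sigma_j+n^\ast$, the events $D_1,\dots,D_{j-1}$ lie in $\mathcal F_{\sigma_j}$, so by conditioning $\mathbb{P}\bigl(\bigcap_{i\le j}D_i^c\bigr)=\mathbb E\bigl[\mathbf 1_{\cap_{i<j}D_i^c}\,\mathbb{P}(D_j^c\mid\mathcal F_{\sigma_j})\bigr]\le(1-q)\,\mathbb{P}\bigl(\bigcap_{i\le j-1}D_i^c\bigr)$, hence $\mathbb{P}\bigl(\bigcap_{i\ge1}D_i^c\bigr)\le\lim_j(1-q)^j=0$. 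As $D_j\subseteq\{\text{the process hits }0\}$, the event of never hitting $0$ is contained in $\bigcap_i D_i^c$ and is therefore null; combined with 1) this yields $\lim_n X_n=0$ a.s.

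The main obstacle is Step 1. The set ``escapes above $N$ forever'' depends on the whole trajectory, so 3) cannot be invoked directly and one is forced to stop the process at $S_m$ (the minor integrability issue being handled by conditioning on $\{X_m=c\}$); and even after the supermartingale has been shown to converge one must still exclude the process being \emph{eventually constant} above $N$ — which is precisely the role of hypothesis 4). A secondary, purely bookkeeping difficulty, present throughout Step 2, is that 2) and 4) are phrased for deterministic times and histories, so using them at the random times $\sigma_j$ requires the routine decomposition over $\{\sigma_j=m\}$.
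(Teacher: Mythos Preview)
Your proof is correct. The supermartingale half (your Step~1) matches the paper's argument closely: both stop the process at the first return to $A=\{0,\dots,N\}$, invoke nonnegative supermartingale convergence, and use hypothesis~4) to exclude the limit lying in $B=\{N+1,N+2,\dots\}$. Where you diverge is in the second half. Having shown each excursion into $B$ is a.s.\ finite, the paper next proves that the process \emph{eventually stays} in $A$ (by showing the $m$-th entry time into $B$ is eventually infinite), and only then argues absorption at $0$ via a subsampled process $\widetilde X_k=X_{k\widetilde n}$ together with a somewhat delicate computation around the non-stopping-time $\widetilde T_A=\inf\{n:\widetilde X_k\in A\ \forall k\ge n\}$. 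You bypass both detours: knowing merely that $A$ is visited infinitely often, you sample those visits at spacing $\ge n^\ast$ via genuine stopping times $\sigma_j$, obtain $\mathbb P(D_j\mid\mathcal F_{\sigma_j})\ge q$ straight from the uniform form of~2), and finish with a one-line conditional Borel--Cantelli. This is shorter and sidesteps the measurability bookkeeping the paper needs around $\widetilde T_A$; the paper's route, on the other hand, yields the intermediate structural fact that the number of excursions into $B$ is a.s.\ finite, which your argument does not explicitly isolate (though of course it follows a posteriori once $0$ is hit).
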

\begin{proof}
Let $A=\{0,1,...,N\}$, $B=\{N+1,N+2,...\}$ where $N$ is given in the
theorem. Let us define
\begin{equation}\label{arresto1}\begin{array}{c}
                                  T_{A\rightarrow B}^{(1)}=\inf\{n\geq 0: X_n\in B\}, \quad
T_{B\rightarrow A}^{(1)}=\inf\{n>T_{A\rightarrow B}^{(1)} : X_n\in
A\}, \\
                                   T_{A\rightarrow B}^{(h)}=\inf\{n>T_{B\rightarrow A}^{{(h-1)}}:
X_n\in B\},\quad T_{B\rightarrow A}^{(h)}=\inf\{n>T_{A\rightarrow
B}^{(h)}: X_n\in A\}, \\
                                \end{array}
\end{equation}
for  $h\geq2$.

The random variables $ T_{A\rightarrow B}^{(h)}$, $T_{B\rightarrow
A}^{(h)}$, for  each $ h \geq 1 $, are stopping time. We put
$T_{A\rightarrow B}^{(h)}= \infty $ if the set, on which  the
infimum is defined, is empty or if $ T_{B\rightarrow A}^{(h-1)}
=\infty$. Similarly we write $T_{B\rightarrow A}^{(h)} =\infty$ if
the set, on which  the infimum is defined, is empty or if  $
T_{A\rightarrow B}^{(h)} =\infty$. The following inequalities are
obtained directly by definitions in (\ref{arresto1})
$$
T_{A\rightarrow B}^{(1)} \leq T_{B\rightarrow A}^{(1)} \leq
T_{A\rightarrow B}^{(2)} \leq \ldots \leq T_{A\rightarrow B}^{(h)}
\leq T_{B\rightarrow A}^{(h)} \leq \ldots
$$
The previous inequalities are strict until one of these stopping
times becomes infinite.

 Let us define the stopped process
 $ (Y^{(m)}_{n }=X_{n\wedge T_{B\rightarrow A}^{(m)}} : n \in \mathbb{N} )  $
 on $\{T_{A\rightarrow B}^{(m)}<\infty\}$, for
$m\in\mathbb{N}_+$. We do a partition of $\{T_{A\rightarrow
B}^{(m)}<\infty\}$ in the sets $ \{ \{T_{A\rightarrow B}^{(m)}=k\} :
k \in \mathbb{N}_+ \} $. On every set $ \{T_{A\rightarrow
B}^{(m)}=k\}$, the elements of $A$ are absorbing states  for
$Y_n^{(m)}$ when $n\geq k$, therefore $\{Y_n^{(m)}\}_{n \geq k}$ is
a non-negative supermartingale on $ \{T_{A\rightarrow B}^{(m)}=k\}$,
by hypothesis 3). Thus, see \cite{Wi}, there exists
\begin{equation}\label{time}
\lim_{n\rightarrow{+\infty}}Y_n^{(m)}<\infty  \,\,\, \hbox{ on }
\{T_{A\rightarrow B}^{(m)}<\infty\}  \,\,\,a.s.
\end{equation}
We will prove that the limit in (\ref{time}) belongs to $A$ almost
surely.

Given $k\in\mathbb{N}_+$, we prove (\ref{time}) on the set
$\{T_{A\rightarrow B}^{(m)}=k\}$. In fact if $i\in B$
$$
\mathbb{P}(\lim_{n\rightarrow{+\infty}}Y_n^{(m)}=i|T_{A\rightarrow
B}^{(m)}=k)=\mathbb{P}\bigg
(\bigcup_{h=k+1}^\infty\bigcap_{n=h}^\infty\{Y_n^{(m)}=i\}\bigg|T_{A\rightarrow
B}^{(m)}=k\bigg)
$$
$$
\leq
\sum_{h=k+1}^\infty\mathbb{P}\bigg(\bigcap_{n=h}^\infty\\
\{{Y_n^{(m)}=i}\}\bigg|T_{A\rightarrow B}^{(m)}=k \bigg) \leq
\sum_{h=k+1}^\infty\prod_{r=h+1}^\infty
\mathbb{P}(Y_r^{(m)}=i|Y_{h}^{(m)}=\ldots
=Y_{r-1}^{(m)}=i,T_{A\rightarrow B}^{(m)}=k)
$$
\begin{equation}\label{carnevale}
    =\sum_{h=k+1}^\infty\prod_{r=h+1}^\infty\mathbb{P}(X_r=i|X_{h}=
    \ldots=X_{r-1}=i,T_{A\rightarrow
B}^{(m)}=k) \,\,\,  ,
\end{equation}
where the last equality is a consequence of the fact that,
 if the limit belonged to $B$, then the process
 $(X_n)_{n\geq k}$ would never visit $ A $ and so, in this case, the
processes $ (Y^{(m)}_n)_{n\geq k}$ and $(X_n)_{n\geq k}$ would coincide.
Now, by using hypothesis 4) and a standard argument on the partition
of the trajectories, we obtain the following upper bound for
(\ref{carnevale})
\begin{equation}\label{vienezero}
    \sum_{h=k+1}^\infty\prod_{r=h}^\infty(1-p_i)=0.
\end{equation}
\comment{ In fact if $i\in B$, $r\geq \bar n$, denoting the event
$\{X_{\bar{n}}=\ldots=X_{r-1}=i\}$ by $J_{\bar n,r,i}$ and the set
of trajectories $L_{\bar
n,k,m}=\{{\mathbf{i}_0^{\bar{n}-1}}\in\mathbb{N}^{\bar
n}:\{\mathbf{X}_0^{\bar{n}-1}
=\mathbf{i}_0^{\bar{n}-1}\}\subset\{T_{A\rightarrow B}^{(m)}=k\}\}$
then almost surely
$$
\mathbb{P}(X_r\neq i|J_{\bar n,r,i},T_{A\rightarrow B}^{(m)}=k)=
$$
$$
=\sum_{{\mathbf{i}_0^{\bar{n}-1}}\in L_{n,k,m}}\mathbb{P} (X_r\neq
i|\mathbf{X}_0^{\bar{n}-1} =\mathbf{i}_0^{\bar{n}-1},J_{\bar
n,r,i},T_{A\rightarrow B}^{(m)}=k)
\mathbb{P}(\mathbf{X}_0^{\bar{n}-1}=\mathbf{i}_0^{\bar{n}-1}|J_{\bar
n,r,i},T_{A\rightarrow B}^{(m)}=k)\geq
$$
$$
\geq \inf_{{\mathbf{i}_0^{\bar{n}-1}}\in\mathbb{N}^{\bar
n}}\mathbb{P}(X_r\neq
i|\mathbf{X}_0^{\bar{n}-1}=\mathbf{i}_0^{\bar{n}-1},J_{\bar
n,r,i})\sum_{{\mathbf{i}_0^{\bar{n}-1}}\in L_{n,k,m}}
\mathbb{P}(\mathbf{X}_0^{\bar{n}-1}=\mathbf{i}_0^{\bar{n}-1}|J_{\bar
n,r,i},T_{A\rightarrow B}^{(m)}=k)\geq p_i.
$$
}
Hence we get that
\begin{equation*}
\lim_{n\rightarrow{+\infty}}Y_n^{(m)}\in A\ a.s.
\end{equation*}
or equivalently that
$$\mathbb{P}\left (\{T_{A\rightarrow B}^{(m)}<
\infty\} \setminus \{T_{B\rightarrow A}^{(m)}<\infty\} \right )=0 ,
$$
from which
\begin{equation}\label{sla}
    \mathbb{P}(\cdot |T_{A\rightarrow
B}^{({m-1})}<\infty)= \mathbb{P}(\cdot |T_{B\rightarrow
A}^{({m-1})}<\infty).
\end{equation}

Notice that, if the numbers $n_i $, for $i =0, \ldots , N$, verify
hypothesis 2) of the theorem, then, by taking  $n \geq \max\{n_i: i
\leq N\} $,  condition 2) is still verified. In fact, if the process
visits the state zero, then it indefinitely remains in zero, which
directly follows by hypothesis 1). Therefore let us define
$\widetilde{n}=\max \{ n_i: i \leq N \} \in \mathbb{N}_+$, then
hypothesis 2) is satisfied by using $ \widetilde{n}$ instead of $n_i
$ where the values of the
 $q_i$'s can only increase by replacing all the $n_i$'s with
$\widetilde{n}$. Hence all the $q_i$'s calculated setting $n_i=
\widetilde{n}$ are greater than some positive constant~$q$ which
 can be chosen equal to $ \inf\{ q_i : i =1, \ldots , N\}$.

Then we get, by (\ref{sla}), that for $ k \in \mathbb{N}_+$
\begin{equation*}\label{serve}
    \mathbb{P} ( T_{A\rightarrow B}^{((k+1) \widetilde{n}) } = \infty |
 T_{A\rightarrow B}^{(k \widetilde{n}) } <\infty)=
 \mathbb{P} ( T_{A\rightarrow B}^{((k+1) \widetilde{n}) } = \infty |
 T_{B\rightarrow A}^{(k \widetilde{n}) } <\infty).
\end{equation*}
 By denoting the set of trajectories
$M_{n,k}=\{{\mathbf{i}_0^n}\in\mathbb{N}^n:\{\mathbf{X}_0^n
=\mathbf{i}_0^n\}\subset\{T_{B\rightarrow A}^{(k \widetilde n)
}=n\}\}$, from the previous relation
%and through a
%partition on the trajectories according to the value of
%$T_{B\rightarrowA}^{(k\widetilde{n})}$,
we obtain
$$
\mathbb{P} ( T_{A\rightarrow B}^{((k+1) \widetilde{n}) } = \infty |
 T_{B\rightarrow A}^{(k \widetilde{n}) } <\infty)
 $$
$$
 =\sum_{n=1}^\infty
 \sum_{\mathbf{i}_0^n\in M_{n,k}}\mathbb{P}
 (T_{A\rightarrow B}^{((k+1) \widetilde{n}) } = \infty | T_{B\rightarrow A}^{(k \widetilde{n}) }=n,
 \mathbf{X}_0^n =\mathbf{i}_0^n) \mathbb{P} ( T_{B\rightarrow A}^{(k \widetilde{n}) }=n,
 \mathbf{X}_0^n =\mathbf{i}_0^n |  T_{B\rightarrow A}^{(k \widetilde{n}) } <\infty
 )
$$
$$
\geq \sum_{n=1}^\infty
 \sum_{\mathbf{i}_0^n\in M_{n,k}} \mathbb{P }( X_{n +\widetilde{n}}=0|
 \mathbf{X}_0^n =\mathbf{i}_0^n) \mathbb{P} ( T_{B\rightarrow A}^{(k \widetilde{n}) }=n,
 \mathbf{X}_0^n =\mathbf{i}_0^n |  T_{B\rightarrow A}^{(k \widetilde{n}) } <\infty
 )\geq q >0.
$$

Thus indicating $m =  \lfloor n /\widetilde{n} \rfloor $ for a
generic $n \in \mathbb{N}_+$, we obtain the following relation
$$
\mathbb{P}(T_{A\rightarrow B}^{(n)}<\infty)\leq
\prod_{k=2}^m\mathbb{P}(T_{A\rightarrow B}^{(k
\widetilde{n})}<\infty|T_{A\rightarrow B}^{((k-1)\widetilde{n})}<
\infty)\leq(1-q)^{m-1} .
$$
Since, for each $n \in \mathbb{N_+}$, $\{T_{A\rightarrow
B}^{(n)}<\infty\}\supset \{T_{A\rightarrow B}^{(n+1)}<\infty\}$, by
the monotone convergence theorem
\[
\mathbb{P}\bigg(\bigcap_{n=1}^\infty \{T_{A\rightarrow
B}^{(n)}<\infty\}\bigg)=
\lim_{n\rightarrow{+\infty}}\mathbb{P}(T_{A\rightarrow
B}^{(n)}<\infty)\leq \lim_{n\rightarrow{+\infty}} (1-q)^{  \lfloor n
/\widetilde{n} \rfloor -1} =0.
\]
Hence almost surely there exists a finite random index $S=2, 3,
\ldots $ such that $T_{A\rightarrow B}^{(S-1)}<\infty$,
$T_{B\rightarrow A}^{(S-1)}<\infty$ and $T_{A\rightarrow
B}^{(S)}=\infty$, then $X_n\in A$ for any $n\geq T_{B\rightarrow
A}^{(S-1)}$. It remains to show that the process can not stay
indefinitely in  $\{ 1, 2, \ldots, N \}$.

Let us define
$$
\widetilde{X}_k = X_{k \widetilde{n}}, \hbox{ for } k \in
\mathbb{N}.
$$
Note that for the process $\mathbf{\widetilde{X}} =(\widetilde{X}_n
: n \in \mathbb{N})$ there exists a random time almost surely finite
$$
\widetilde{T}_A = \inf \{n : \widetilde{X}_k\in A, \hbox{ for } k
\geq n \},
$$
such that the process remains indefinitely in $A$ after $
\widetilde{T}_A$. Moreover observe that $\widetilde{T}_A$ is not a
stopping time and it shall be taken into account the information
provided by the value of $\widetilde{T}_A$. Directly from hypothesis
2) it follows that
$$
\tilde{q}={\inf}_{m \in \mathbb{N},i_0 ,i_1, \ldots, i_{m-1}\in
\mathbb{N} ,i_m \in A} \mathbb{P} (\widetilde{X}_{m+1 }=0
|\mathbf{\widetilde{X}}_0^m =\mathbf{i}_0^m)
$$
is positive.

Now we will show that for each $n\in\mathbb{N}_+$,
\begin{equation*}\label{ess}
{\inf}_{m \geq n, \mathbf{i}_0^{n-2}\in \mathbb{N}^{n-1}, i_{n-1}\in
B, i_n, \ldots ,i_m\in A} \mathbb{P} ( \widetilde{X}_{m+1 }=0
|\widetilde{T}_A=n, \mathbf{\widetilde{X}}_0^m =\mathbf{i}_0^m)\geq
\tilde{q} >0.
\end{equation*}
We notice that for $\mathbf{i}_0^{n-2}\in \mathbb{N}^{n-1}$,
$i_{n-1}\in B$, $i_n, \ldots ,i_m\in A$,
 $$
\{\mathbf{\widetilde{X}}_0^m =\mathbf{i}_0^m, \widetilde{X}_{m+1 }=0
\} \subset\{\widetilde{T}_A=n  \},
 $$
from which
$$
\mathbb{P}(\mathbf{\widetilde{X}}_0^m =\mathbf{i}_0^m,
\widetilde{X}_{m+1 }=0)\leq\mathbb{P}(\widetilde{T}_A=n).
$$
Hence
$$\mathbb{P}(\widetilde{X}_{m+1 }=0 |\widetilde{T}_A=n,\mathbf{\widetilde{X}}_0^m =\mathbf{i}_0^m)=
\frac{\mathbb{P}(\widetilde{T}_A=n,\mathbf{\widetilde{X}}_0^m
=\mathbf{i}_0^m,\widetilde{X}_{m+1 }=0)}
{\mathbb{P}(\widetilde{T}_A=n,\mathbf{\widetilde{X}}_0^m
=\mathbf{i}_0^m)}
$$
$$=\frac{\mathbb{P} (\mathbf{\widetilde{X}}_0^m =\mathbf{i}_0^m,\widetilde{X}_{m+1 }=0)}
{\mathbb{P} (\widetilde{T}_A=n,\mathbf{\widetilde{X}}_0^m =\mathbf{i}_0^m)}\geq
\frac{\mathbb{P} (\mathbf{\widetilde{X}}_0^m =\mathbf{i}_0^m,\widetilde{X}_{m+1 }=0)}
{\mathbb{P}(\mathbf{\widetilde{X}}_0^m =\mathbf{i}_0^m)}=
\mathbb{P}(\widetilde{X}_{m+1 }=0|\mathbf{\widetilde{X}}_0^m =\mathbf{i}_0^m).
$$
From which by taking the infimum,
\begin{equation*}\label{ffii1}
    {\inf}_{m \geq n, \mathbf{i}_0^{n-2}\in \mathbb{N}^{n-1},
i_{n-1}\in B, i_n, \ldots ,i_m\in A} \mathbb{P} ( \widetilde{X}_{m+1
}=0 |\widetilde{T}_A=n, \mathbf{\widetilde{X}}_0^m =\mathbf{i}_0^m)
\end{equation*}
\begin{equation*}\label{ffii2}
    \geq{\inf}_{m \geq n, \mathbf{i}_0^{n-2}\in
\mathbb{N}^{n-1}, i_{n-1}\in B, i_n, \ldots ,i_m\in A}
\mathbb{P}(\widetilde{X}_{m+1 }=0|\mathbf{\widetilde{X}}_0^m
=\mathbf{i}_0^m)
\end{equation*}
\begin{equation*}\label{ffii3}
    \geq{\inf}_{m \in \mathbb{N},i_0 ,i_1, \ldots, i_{m-1}\in
\mathbb{N} ,i_m \in A} \mathbb{P} (\widetilde{X}_{m+1 }=0
|\mathbf{\widetilde{X}}_0^m =\mathbf{i}_0^m)=\tilde{q}>0.
\end{equation*}
Analogously to (\ref{vienezero}), by the latter inequalities and
standard arguments on the partition of trajectories, one obtains that
 the process $ \mathbf{\widetilde{X}}$
is eventually equal to zero. Obviously the same property is
obtained for the original process $\mathbf{X}$, i.e.
$\lim_{n\rightarrow{+\infty}}X_n=0\,\,a.s.$
\end{proof}

\begin{rem}\label{biologia}
We note that, in the previous theorem, the process $(X_n)_{n \in
\mathbb{N}}$ could be a nonhomogeneous Markov chain. In particular,
one can consider a culture of bacteria in which the number of its
population affects the ability of reproduction of the bacteria by
changing the probability that the cell dies before its mitosis. In
some way we can think that a process $(X_n)_{n \in \mathbb{N}}$,
verifying the assumptions of Theorem \ref{prop:1}, can be chosen as
a model for these biological cultures. Therefore the bacteria
cultures will die in a finite time.
\end{rem}

\section{ Applications of Theorem \ref{prop:1} to  perfect simulation} \label{sec5}

Let us consider a probability distribution $\psi_v$ indexed by $v\in
\mathbb{Z}^d$ and let $\sum_{l=0}^\infty \psi_v(l)=1 $. Moreover,
for each $v\in\mathbb{Z}^d$, let $\psi_{v}(0)>0 $.
%Si indicher� la media
%della distribuzione $\lambda_v $ con $ \mu_v$, quindi
%$$
%\eta_v =-\lambda_{v}(0)+\sum_{l=1}^\infty l \lambda_v (l).
%$$

Let us associate to each vertex $v \in \mathbb{Z}^d$ a sequence
$\mathbf{S}_v =(S_v(l)\Subset \mathbb{Z}^d :l\in\mathbb{N}_+) $ and
a mass $M_v $ such that
 $\inf_{v\in\mathbb{Z}^d}M_v \geq 1$.
%such that $|S_v(l)|=l-1$.

Let $v \in \mathbb{Z}^d$ and $(D_n)_{n\in\mathbb{N}}$ be a
homogeneous Markov chain with countable state space
$\mathcal{C}=\{A\Subset\mathbb{Z}^d\}$.
%for each $n\in\mathbb{N}$.

At time zero the Markov chain has a initial measure $\nu^{(0)}$. The
rules of the dynamics are given in Section \ref{sec2}, it  only
needs to replace  $C_n$, $\mathbf{B}_v $, $ \lambda_v  $ with
$D_n$, $\mathbf{S}_v $, $ \psi_v  $ respectively.

\comment{ If $C_n = \emptyset $ then $D_{n+1 } = \emptyset $. Given
$D_n \neq \emptyset $ we proceeds to construct $D_{n+1 }$ as
follows: we select a random vertex $W_n\in C_n$ using
(\ref{sortver}) with constants $\{M_v\}_{v\in\mathbb{Z}^d}$
verifying $\inf_{v\in\mathbb{Z}^d}M_v \geq 1$, then we draw a random
value $ K_{w,n} $ with probability $\mathbb{P}(K_{w,n} =k) =
\psi_w(k) $, for $k \in \mathbb{N}$. If $ K_{w,n} =0 $ then $ C_{n+1
} = C_{n } \setminus \{ w\}$; if $ K_{w,n} =k $, for $ k\in
\mathbb{N}_+$, then $ C_{n+1 } = C_{n } \cup S_w(K_{w,n}) =C_{n }
\cup S_w(k) $. In particular if $ K_{w,n} =1$, then $C_{n+1 } = C_{n
}$ since $S_v(1)= \emptyset$ for each $v \in \mathbb{Z}^d$. }

Let us define, for each $v\in\mathbb{Z}^d$,
\begin{equation}\label{eta}
\eta_v=-\psi_v(0)+\sum_{l=1}^\infty|S_v(l)|\psi_v(l),
\end{equation}
which is similar to the birth-death expectation and  plays  the same
role.

 We are now in the position to present our result
on the extinction of the processes above defined.

\begin{corollary}\label{estinzione}
 Let $\eta_v$ as in (\ref{eta}), if
    $
\lim_{\Lambda\uparrow \mathbb{Z}^d} \sup_{v \notin
\Lambda}\eta_{v}<0 $,  then
%\begin{equation}\label{limitediC}
  $  \limsup_{n \to \infty} D_n =~\emptyset$ almost surely.
 %\end{equation}
\end{corollary}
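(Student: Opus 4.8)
The plan is to apply Theorem~\ref{prop:1} to the cardinality process $X_n:=|D_n|$, which takes values in $\mathbb{N}$ since $D_n\in\mathcal{C}=\{A\Subset\mathbb{Z}^d\}$, and to deduce $\lim_n X_n=0$ a.s.; as $X_n=0$ exactly when $D_n=\emptyset$, this gives $D_n=\emptyset$ for all large $n$, hence $\limsup_n D_n=\emptyset$ a.s. First I would fix, using the hypothesis, a finite set $\Lambda\Subset\mathbb{Z}^d$ and $\epsilon>0$ with $\sup_{v\notin\Lambda}\eta_v\le-\epsilon$. From $\eta_v=-\psi_v(0)+\sum_{l\ge1}|S_v(l)|\psi_v(l)\ge-\psi_v(0)$ it follows that $\psi_v(0)\ge\epsilon$ for $v\notin\Lambda$; together with $\psi_v(0)>0$ on the \emph{finite} set $\Lambda$ this yields $\delta:=\inf_{v\in\mathbb{Z}^d}\psi_v(0)>0$. (Here and below we understand $\eta_v<\infty$ for every $v$, which is implicit in~(\ref{eta}) and automatic in the applications of interest, e.g.\ under~(\ref{sommab}) in the setting of Theorem~\ref{affonda}.) I would also set $L:=\max\{0,\max_{v\in\Lambda}\eta_v\}<\infty$ and $M_\Lambda:=\max_{v\in\Lambda}M_v<\infty$.

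Hypotheses 1), 2), 4) of Theorem~\ref{prop:1} are the easy part. Hypothesis 1) is immediate: if $D_n=\emptyset$ then $D_{n+1}=\emptyset$ by the dynamics, so $X_n=0\Rightarrow X_{n+1}=0$. For 2) and 4), note that whatever $D_n\neq\emptyset$ is and whatever the past, with probability $\sum_{w\in D_n}\frac{M_w}{\sum_{z\in D_n}M_z}\psi_w(0)\ge\delta$ a single step removes the selected vertex (the event $K_{W_n,n}=0$), decreasing $|D_n|$ by one; since $(D_n)$ is a Markov chain, this bound survives conditioning on the past of $\mathbf{X}$. Iterating $i$ such steps from a set of size $i$ empties it, so with $n_i=i$ (and $n_0=1$) one gets $q_i\ge\delta^{i}>0$ in hypothesis 2); and since such a step changes the cardinality, $p_i\ge\delta>0$ in hypothesis 4).

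The real content is hypothesis 3), the supermartingale estimate for large values. Given $D_n=C$, picking $w\in C$ and drawing $K$ gives $|D_{n+1}|=|C|-1$ if $K=0$ and $|D_{n+1}|=|C\cup S_w(K)|\le|C|+|S_w(K)|$ if $K\ge1$, whence
\[
\mathbb{E}\bigl(|D_{n+1}|-|D_n|\,\big|\,D_n=C\bigr)\le\sum_{w\in C}\frac{M_w}{\sum_{z\in C}M_z}\,\eta_w .
\]
Splitting $C=(C\cap\Lambda)\sqcup(C\setminus\Lambda)$, bounding each of the at most $|\Lambda|$ terms of the first sum by $M_\Lambda L$ and each term of the second by $-\epsilon M_w\le-\epsilon$ (recall $M_w\ge1$ and $\eta_w\le-\epsilon$ off $\Lambda$), the numerator is at most $|\Lambda|(M_\Lambda L+\epsilon)-\epsilon|C|$, which is $\le0$ as soon as $|C|\ge N$, where $N:=\bigl\lceil|\Lambda|\,(1+M_\Lambda L/\epsilon)\bigr\rceil$. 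Hence $\mathbb{E}(|D_{n+1}|\mid D_n=C)\le|C|$ for every $C$ with $|C|>N$, and averaging over the sets $D_n$ compatible with a prescribed history of $\mathbf{X}$ (again by the Markov property of $(D_n)$) gives hypothesis 3): $\mathbb{E}(X_{n+1}\mid X_0=i_0,\dots,X_n=i_n)\le i_n$ whenever $i_n>N$. I expect this drift estimate — showing that the finitely many ``bad'' vertices of $\Lambda$ cannot destroy the negative drift once $|D_n|$ is large enough — to be the only step that requires care.

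With hypotheses 1)--4) verified for $\mathbf{X}=(X_n)$ and the above $N$, Theorem~\ref{prop:1} yields $\lim_{n\to\infty}X_n=0$ a.s., i.e.\ $|D_n|=0$ for all sufficiently large $n$ almost surely, and therefore $\limsup_{n\to\infty}D_n=\emptyset$ a.s., which is the assertion.
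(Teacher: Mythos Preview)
Your proof is correct and follows essentially the same approach as the paper: set $X_n=|D_n|$, verify the four hypotheses of Theorem~\ref{prop:1} by exploiting that the ``bad'' set $\Lambda$ (the paper's $R_\delta$) is finite while off it the drift is uniformly negative, and use $\inf_v\psi_v(0)>0$ for hypotheses~2) and~4). The only cosmetic differences are that the paper bounds the bad contribution by $a=\max\{0,M_v\eta_v:v\in R_\delta\}$ rather than your $M_\Lambda L$, and takes a uniform $n_i=N$ with $q_i\ge\xi^N$ in hypothesis~2) instead of your $n_i=i$ with $q_i\ge\delta^i$.
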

\begin{proof}
Let $X_n = |D_n|$, we want to show that the process
$(X_n)_{n\in\mathbb{N}}$ verifies all the hypotheses of Theorem
\ref{prop:1}. Hypothesis 1) is trivially verified  because if $ D_n
= \emptyset$, then  $ D_{n+1} =\emptyset $ . We now verify
hypothesis 3). First of all note that from the assumption of the
corollary it follows the existence of a $ \delta>0 $ such that the
set
\begin{equation*}\label{delta}
R_\delta =\{v \in \mathbb{Z}^d:\eta_v > -\delta \}
\end{equation*}
has finite cardinality.

Fix $\delta > 0 $ such that $ | R_\delta | < \infty$, and
 define
$ a=\max\{0, M_v\eta_v: v \in R_\delta \} $. Consider $D_n \neq
\emptyset $, we easily see that
$$
\mathbb{E}( X_{n+1}| D_n )= \mathbb{E}( | D_{n+1} | \,\, | D_n )\leq
|D_n|+\sum_{ v\in D_n } \frac{M_v}{ \sum_{ u\in D_n }M_u }\eta_v.
$$
Under the assumption of the corollary and since $M_v\geq1$ for each
$v\in\mathbb{Z}^d$, we obtain
$$
\mathbb{E}( X_{n+1}| D_n ) \leq   |D_n|+ \frac{1}{\sum_{ u\in D_n
}M_u } \left [a |   R_\delta  | -\delta (|D_n|- | R_\delta  |)
\right ].
$$
We get that if
\begin{equation}\label{Ngrande}
X_n =|D_n | \geq  \left \lceil \frac{a  | R_\delta  | }{\delta} +  |
R_\delta
 |    \right  \rceil    \equiv N ,
\end{equation}
then $ \mathbb{E}( X_{n+1}| D_n )\leq X_n  $. Since
\begin{equation}\label{medcond}
\mathbb{E}( X_{n+1}| \mathbf{X}_0^n =\mathbf{i}_0^n ) = \sum_{
A\Subset \mathbb{Z}^d : |A| =i_n} \mathbb{E} ( X_{n+1}| D_n =A
)\mathbb{P} ( D_n =A | \mathbf{X}_0^n =\mathbf{i}_0^n ),
\end{equation}
we have that (\ref{medcond}) is lesser or equal to $ X_n=i_n$ when
$i_n \geq N$. Hence  hypothesis  3) is obtained by choosing $N$ as
in (\ref{Ngrande}), because all the summands in (\ref{medcond}) are
non-positive.

Now we show that
$$
\xi= \inf_{v \in \mathbb{Z}^d}\psi_v(0)>0.
$$
Note that
$$
\rho =\inf \{\psi_v (0): v\in R_\delta \} >0
$$
because it is an infimum on a finite set of positive numbers.
Moreover, from (\ref{eta}), it follows
$$
\rho' =\inf\{\psi_v (0): v\in R^c_\delta\} \geq \delta >0.
$$
Hence
$$
\xi  = \min \{ \rho,\rho' \}>0 .
$$
Therefore hypothesis 2) is verified for $n_i = N$ and the $q_i$'s
are larger or equal than  $ \xi^N>0 $, for $i \leq N$.

 We
also obtain 4) observing that $ p_i \geq \xi>0$ for each $ i \in
\mathbb{N}_+$.

Thus, from Theorem \ref{prop:1},
$$
\lim_{n\rightarrow{+\infty}}X_n=0\, \,a.s.
$$
There exists an almost surely finite random time $Y$ such that
$C_Y=\emptyset$.
\end{proof}

Given $ \mathbf{J} \in \mathcal{J} $, $v\in\mathbb{Z}^d$, $
\mathbf{B}_v \in \mathcal{B}_v $,  set
$$
S_v(l)=B_v(l)\setminus\{v\} \hbox{ for } l\in\mathbb{N}_+,
$$
and $ \psi_v =  \lambda_{ v,  \mathbf{J} , \mathbf{B}_v }  $, then,
by a simple calculation, $ \eta_v = \mu_{ v,  \mathbf{J}} (
\mathbf{B}_v ) $. Putting $ M_u =2\exp(\sum_{B,u\in B}|{J_B}|) $,
for each $ u \in \mathbb{Z}^d$, and $ \nu^{(0)} = \delta_{C_0} $ the
process $ (D_n )_n$ coincides with $ ( C_n )_n$ defined in Section
\ref{sec2}.

\begin{proof} [Proof of Theorem \ref{affonda}] The first part of Theorem \ref{affonda}  is a
direct consequence  of Corollary \ref{estinzione}.
\end{proof}

\medskip

We conclude the paper discussing an example in which an interaction
verifies hypothesis (H2) but does not verify (H1). The example is
constructed by using the property of universality described in
Theorem \ref{pr3}. Let $\mathbf{B}_v$'s be fixed, let us consider an
interaction $\mathbf{J}\in\mathcal{J}$ such that the inequality in
(H1) is verified. For a given $B_0\Subset\mathbb{Z}^d$ such that
$O\in B_0$, define $\mathbf{J}^{(L)}$ as
$$
J_{B}^{(L)}=\left\{\begin{array}{cc}
              J_B & \hbox{if }B\neq B_0; \\
              LJ_B & \hbox{if }B=B_0; \\
            \end{array}\right .
$$
where $L\in\mathbb{R}$. By elementary calculations, for a
sufficiently large $L>0$, it occurs that
$\mu_{O,\mathbf{J}^{(L)}}(\mathbf{B}_O)>0$, hence $\sup_{v \in
\mathbb{Z}^d} \mu_{ v, \mathbf{J}^{(L)}}(\mathbf{B}_v )>0$. Instead
$\lim_{\Lambda\uparrow \mathbb{Z}^d} \sup_{v \notin \Lambda} \mu_{
v, \mathbf{J}^{(L)}} ( \mathbf{B}_v )$ does not depend on $L$,
therefore it is less than zero.

Other examples, verifying (H2) but not (H1), can be naturally
constructed for each result in Section \ref{sec3} following the
scheme of the proofs and choosing suitable values for the
$\mathbf{J}$'s and the $\mathbf{B}_v$'s.

We notice that  Theorem \ref{prop:1}, by eliminating anyone of its
assumptions,  becomes false; examples can be easily constructed.

To finish we stress that condition (H2) differs from (H1) for two
reasons. First, the replacement of the supremum by the limit
superior improves the sufficient condition for the applicability of
the algorithm, but does not change the algorithm; second the
different choice of the sets $\mathbf{B}_v$'s improves the algorithm
and its applicability.

\appendix
\section{Algorithm for the infinite range Ising model} \label{App}

We present the algorithm for the infinite range Ising model showing
how to implement the result presented in Theorem \ref{ThIsing} in a
pseudo code. First one has to prove that, given the interaction
$\mathbf{J}\in \mathcal{J}_2$,
$$
\lim_{ \Lambda \uparrow \mathbb{Z}^d} \sup_{v \not \in
\Lambda}-2e^{-2\sum_{i=1}^\infty J^{(i)}_v}+e^{-\sum_{i=2}^\infty
J^{(i)}_v}+\sum_{l=2}^\infty l\left(e^{-\sum_{i=l+1}^\infty
J^{(i)}_v}-e^{-\sum_{i=l}^\infty J^{(i)}_v}\right)<0.
$$

If one does not use the finite range approximation presented in
\cite{GLO}, it is important that the sums $G_\mathbf{J}(v)=
\sum_{v'\in \mathbb{Z}^d \setminus v }|J_{\{v,v'\}}|$ are calculable
for each $ v \in \mathbb{Z}^d$ and that these values are given as
input in the algorithm. The easiest  case is the translational one
where $ J_{\{ u,v \} } =J_{\{ u+w,v +w\} } $, for any $ u,v, w \in
\mathbb{Z}^d$.

\medskip

\medskip
\noindent
$D$, $H$, $I$, $K$, $L$, $N$, $N_{STOP}$, $R$ are variables taking values in $\mathbb{N}$;\\
%$N_{STOP}$ is a counter taking values in $\mathbb{N}$;\\
$U$ is a variable taking values in $[0,1]$;\\
$V$ is a variable taking values in $\mathbb{Z}^d$;\\
$Y$ is a variable taking values in $\{-1,1\}$;\\
$M$, $M_0$ are  variables taking values in  $\mathbb{R}$;\\
$C$,  $S$, $W$, $Z_-$, $Z_+$ are arrays of elements of $\mathbb{Z}^d$;\\
$Q$ is an array of elements of $\mathbb{Z}^d\times\mathbb{N}\times
\{E\Subset\mathbb{Z}^d\}^2$;\\
$G$ is a function from $\mathbb{Z}^d$ to $\mathbb{R}$;\\
$F$ is a function from $\mathbb{Z}^d \times \mathbb{N}$ to $[0,1]$;\\
$P$ is a function from $\mathbb{N}\times\mathbb{Z}^d
\times\{E\Subset\mathbb{Z}^d\}^2\times\{-1,1\}^{\mathbb{Z}^d}$ to $[0,1]$;\\
$T$ is  a bijective function from $\mathbb{N}$ to $\mathbb{Z}^d$;\\
$X$ is a function from $\mathbb{Z}^d$ to $\{-1,1\}\cup{\Delta}$
where $\Delta$ is an extra symbol that does not belong to $\{-1,1\}$
and it is called cemetery state;\\
RANDOM is a uniform random variable in $[0,1]$.

\medskip
\noindent
Algorithm 1: backward sketch procedure plus  construction  of optimal $ \mathbf{B}_v $'s \\
Input: $\mathbf{J}\in\mathcal{J}_2$; $C=(V_1,\ldots,V_{|C|})$;
$G(V)=
\sum_{V'\in \mathbb{Z}^d \setminus V }|J_{\{V,V'\}}|$;\\
Output: $N_{STOP}$; $Q$;\\
1. $N\leftarrow0$; ${N}_{STOP}\leftarrow0$; $Q\leftarrow\emptyset$; $D \leftarrow |C|$;\\
2. WHILE $C\neq\emptyset$\\
3. $N\leftarrow N+1$; $R\leftarrow 1$; $ M \leftarrow 0$; $ M_0 \leftarrow 0 $; $S\leftarrow \emptyset $;\\
4. $U\leftarrow \hbox{RANDOM}()$;\\
5. WHILE $\sum_{H=1}^R2\exp(G(V_H))/
\sum_{I=1}^{|C|}2\exp(G(V_I))<U$\\
6. $R\leftarrow R+1;$\\
7. END WHILE\\
8. $K\leftarrow0$;\\
9. $F(V_R,0)\leftarrow\exp(-2G(V_R))$;\\
10. WHILE $F(V_R,K)<U$\\
11. $K\leftarrow K+1$; $L\leftarrow1$;  \\
12. WHILE $G(V_R)-M-\sum_{I=1}^{L}|J_{\{V_R,T_I+V_R\}}|
\mathbf{1}(T_I \not \in S)>\\
\max\{|J_{\{V_R,T_I+V_R\}}|:I=1,\ldots,L,\;T_I\not \in  S \}$\\
13. $L\leftarrow L+1$;\\
14. $M_0 \leftarrow \max\{|J_{\{V_R,T_I+V_R\}}|:I=1,\ldots,L,\;T_I\not \in  S \}$;  \\
15. END WHILE\\
16. $A\leftarrow\min\{I=1,\ldots,L,\;T_I\not \in  S:|J_{\{V_R,T_I+V_R\}}|=M_0\}$;\\
17. $ M \leftarrow M +|J_{\{V_R,T_A+V_R\}}|$;\\
18. $S\leftarrow S\cup (T_A+V_R)$;\\
19. $W_K\leftarrow T_A+V_R$;\\
20. $F(V_R,K)\leftarrow\exp(-G(V_R)+\sum_{I=1}^K|J_{\{V_R,W_I\}}|)$;\\
21. END WHILE \\
22. IF $K=0$\\
23. $C\leftarrow C\setminus V_R$;\\
24. ELSE\\
25. FOR $I=1,\ldots,L$;\\
26. $C\leftarrow C\cup W_I$;\\
27. END FOR\\
28. END IF\\
29. $Q(N)\leftarrow(V_R,K,\bigcup_{I=1}^{L-1} W_I,\bigcup_{I=1}^L W_I)$;\\
30. END WHILE\\
31. $N_{STOP}\leftarrow N$;\\
32. RETURN $N_{STOP}$; $Q$.\\

\noindent Algorithm 2: forward spin assignment procedure\\
Input: $N_{STOP}$; $Q$;\\
Output: $\{X(V_1), \ldots,X(V_D) \}$;\\
33. $N\leftarrow N_{STOP}$;\\
34. $X(j)\leftarrow\Delta$ for all $j\in\mathbb{Z}^d$;\\
35. WHILE $N\geq1$\\
36. $(V,K,Z_-, Z_+)\leftarrow Q(N)$;\\
37. $U\leftarrow \hbox{RANDOM}()$;\\
38. IF $0\leq U\leq P_{V,Z_-, Z_+}^{[K]}(-X(V)|X)$\\
39. $Y=-1$;\\
40. ELSE $Y=1$;\\
41. END IF\\
42. $X(V)\leftarrow Y\cdot\mathbf{1}(K=0)+X(V)\cdot Y\cdot\mathbf{1}(K>0)$;\\
43. $N\leftarrow N-1$;\\
44. END WHILE\\
45. RETURN $\{X(V_1), \ldots,X(V_D) \}$.\\

\medskip

We write some comments  to facilitate the understanding of the
pseudo code.

\noindent Line 2. the b.s.p. ends when the set $C$ becomes empty.\\
\noindent Lines 5.-7. a random vertex $V_R$ in $C$ is chosen with
probability given in (\ref{sortver}). \\
\noindent Lines 10.-21. a random value $ K$, related to the vertex $
V_R$, is selected by Skorohod representation that uses $F_{V_R}(K)$
the cumulative distribution of $ \lambda$ (see (\ref{lambda})).
Notice that, for each $k$,
$ F_{V_R}(k)$ can be calculated with a finite number of elementary operations, when $G(v)$ is known. \\
\noindent Lines 12.-15. it is a small algorithm that finds for a
positive  sequence $ \{a_n\}_{n \in \mathbb{N} } $ with $ L =
\sum_{n\in \mathbb{N} } a_n < \infty$ the biggest element $ a_{\bar
n } = \max \{ a_n :n \in \mathbb{N}  \} $ and the index $\bar n$. We
stress that it is done in a finite number of steps. Iteratively  the
second biggest element is calculated and so on.  \\
\noindent Line 38. The probabilities $ p^{[k]}_{v, \mathbf{J},
\mathbf{B}_v}(-\sigma(v)|\sigma) $ defined in
(\ref{probp1})-(\ref{probp2}) depend on the finite sets $ B_v (k-1)$
and $B_v (k) $ that in the pseudo code are $Z_-$ and $Z_+$
respectively. In the pseudo code these probabilities are $P_{V,Z_-,
Z_+}^{[K]}(-X(V)|X)$. In (\ref{probp1}) and (\ref{probp2}) all the
sums have a finite number of elements, except one in (\ref{probp1})
that can be rewritten as $- G(v) +\sum_{u \in B_{v} (1)} |J_{\{
v,u\}}| $, which has a finite number of addenda.

\bibliography{versione7nov}
\end{document}